\numberwithin{equation}{section}
\numberwithin{figure}{section}
\theoremstyle{plain}
\newtheorem{thm}{\protect\theoremname}[section]
\theoremstyle{definition}
\newtheorem{defn}[thm]{\protect\definitionname}
\theoremstyle{definition}
\newtheorem{example}[thm]{\protect\examplename}
\theoremstyle{plain}
\newtheorem{lem}[thm]{\protect\lemmaname}
\theoremstyle{plain}
\newtheorem{prop}[thm]{\protect\propositionname}
\theoremstyle{plain}
\newtheorem{cor}[thm]{\protect\corollaryname}
\providecommand{\corollaryname}{Corollary}
\providecommand{\definitionname}{Definition}
\providecommand{\examplename}{Example}
\providecommand{\lemmaname}{Lemma}
\providecommand{\propositionname}{Proposition}
\providecommand{\theoremname}{Theorem}
\begin{document}
\title[Simple Locally Finite Associative Algebras]{\singlespacing{} Local Systems of Simple Locally Finite Associative Algebras}
\author{}
\author{Hasan M. S. Shlaka}
\address{Department of Mathematics, Faculty of Computer Science and Mathematics,
University Kufa, Al-Najaf, Iraq.}
\email{hasan.shlaka@uokufa.edu.iq}
\begin{abstract}
In this paper, we study local systems of locally finite associative
algebras over fields of characteristic $p\ge0$. We describe the perfect
local systems and study the relation between them and their corresponding
locally finite associative algebras. $1$-perfect and conical local
systems are also be considered and described briefly\href{http://.}{.}
\end{abstract}

\maketitle
\begin{singlespace}

\section{Introduction}
\end{singlespace}

\begin{singlespace}
\global\long\def\ad{\operatorname{\rm ad}}%

\global\long\def\bbR{\mathbb{R}}%

\global\long\def\bbF{\mathbb{F}}%

\global\long\def\ccR{\mathcal{R}}%

\global\long\def\ccF{\mathcal{F}}%

\global\long\def\ccM{\mathcal{M}}%

\global\long\def\ccL{\mathcal{L}}%

\global\long\def\ccP{\mathcal{P}}%

\global\long\def\ccB{\mathcal{B}}%

\end{singlespace}

\selectlanguage{english}%
\global\long\def\dlim{\operatorname{\underrightarrow{{\rm lim}}}}%

\selectlanguage{british}%
\begin{singlespace}
\global\long\def\core{\operatorname{\rm core}}%

\global\long\def\dim{\operatorname{\rm dim}}%

\global\long\def\End{\operatorname{\rm End}}%

\global\long\def\gl{\operatorname{\rm \mathfrak{gl}}}%

\global\long\def\Im{\operatorname{\rm Im}}%

\global\long\def\ker{\operatorname{\rm ker}}%

\global\long\def\rad{\operatorname{\rm rad}}%

\global\long\def\rank{\operatorname{\rm rank}}%

\global\long\def\Range{\operatorname{\rm Range}}%

\global\long\def\sym{\operatorname{\rm sym}}%

\global\long\def\skew{\operatorname{\rm skew}}%

\global\long\def\sl{\operatorname{\rm \mathfrak{sl}}}%

\global\long\def\sp{\operatorname{\rm \mathfrak{sp}}}%

\global\long\def\so{\operatorname{\rm \mathfrak{so}}}%

\global\long\def\su{\operatorname{\rm \mathfrak{su^{*}}}}%

\global\long\def\u{\operatorname{\rm \mathfrak{u^{*}}}}%

\end{singlespace}

Throughout the paper the field $\mathbb{F}$ is algebraically closed
of characteristic $p\ge0$ and $A$ is an infinite (countable) dimensional
locally finite associative algebra over $\mathbb{F}$. Recall that
an algebra $A$ is called locally finite if every finite set of elements
is contained in a finite dimensional subalgebra of $A$ \cite{BahBavZal2004}.
$A$ is called \emph{locally} \emph{simple} if for any finite subset
$U$ of $A$, there is a finite dimensional simple subalgebra that
contains $U$. Note that we do not require $A$ to have an identity
element.

Locally finite Lie algebras were described by Bahturin and Strade
\cite{bahStr1995} in 1995. They described these algebras in terms
of local systems of locally finite Lie algebras. Recall that a system
of finite dimensional subalgebras $\{A_{\alpha}\}_{\alpha\in\varGamma}$
of an algebra $A$ over $\bbF$ is said to be a \emph{local} \emph{system}
\emph{for} $A$ if $A=\cup_{\alpha\in\varGamma}A_{\alpha}$ and for
each $\alpha,\beta\in\varGamma$, there is $\gamma\in\varGamma$ such
that $A_{\alpha},A_{\beta}\subseteq A_{\gamma}$. Bahturin and Strade
in \cite{bahStr1995Some} provided some examples to describe locally
finite Lie algebras. In several papers (see for example \cite{baranov1998diagonal},
\cite{baranov1998}, \cite{baranov2015simple} and \cite{baranov1999diagonal})
Baranov et. al. classified simple locally finite Lie algebras over
algebraically closed fields of characteristic zeros. They showed that
there are two classes of locally finite Lie algebras which have can
be characterized in many different ways. These are the simple diagonal
locally finite Lie algebras and the finitary simple Lie algebras.
Inner ideals of the finitary simple Lie algebras were studied by Fernandiz
Lopes, Garcia and Gomez Lozano in \cite{LopGarLoz2}, while inner
ideals of the other class were studied by Baranov and Rowley in \cite{BavRow2013}. 

In 2004, Bahturin, Baranov and Zalesski \cite{BahBavZal2004} studied
the simple locally finite associative algebras over algebraically
closed fields of zero characteristic. They highlighted the relation
between them and locally finite Lie algebras over algebraically closed
field of characteristic 0. They proved that simple Lie subalgebras
of locally finite associative ones are either finite dimensional or
isomorphic to the Lie algebra of skew symmetric elements of some (Type1)
involution simple locally finite associative algebras. 

Baranov in \cite{baranov2013} proved that there is a natural bijective
correspondence between such Lie algebras and locally involution simple
associative algebras over algebraically closed fields of any characteristic
$\ne2$. Thus, to classify locally finite Lie algebras, we need to
study locally finite associative algebras briefly. This requires a
good understanding of their local systems. 

In this paper, we study local systems of locally finite associative
algebras over fields of characteristic $p\ge0$. Some of the results
in this papers were mentioned in \cite{BahBavZal2004} in the case
when $p=0$. We start with some of the preliminaries in section two.
Section three is devoted to the study of local systems of locally
finite associative algebras.

In section four, we describe the perfect local systems and study the
relation between them and their corresponding locally finite associative
algebras. Finally, the $1$-perfect and conical local systems were
described.

\section{Preliminaries}

Recall that $\bbF$ is an algebraically closed field of characteristic
$p\ge0$. 
\begin{defn}
\label{def:-locally finite} \cite{BahBavZal2004} A locally finite
algebra is an algebra $A$ over a field $\bbF$ in which every finite
set of elements of $A$ is contained in a finite dimensional subalgebra
of $A$.
\end{defn}

As an example of locally finite associative algebra is the algebra
$M_{\infty}(\bbF)$ of infinite matrices with finite numbers of non-zero
entries, that is, 
\begin{equation}
M_{\infty}(\bbF)=\stackrel[n=1]{\infty}{\cup}M_{n}(\bbF),\label{eq:M infty}
\end{equation}
where the algebra $M_{n}(\bbF)$ can be embedded in $M_{(n+1)}(\bbF)$
by putting $M_{n}(\bbF)$ in the left upper hand corner and bordering
the last column and row by $0$'s.
\begin{defn}
\label{def:local system } Let $A$ be an algebra over a field $\bbF$.
A system of finite dimensional subalgebras $\{A_{\alpha}\}_{\alpha\in\varGamma}$
of $A$ is said to be a \emph{local} \emph{system} for $A$ if $A=\cup_{\alpha\in\varGamma}A_{\alpha}$
and for each $\alpha,\beta\in\varGamma$, there is $\gamma\in\varGamma$
such that $A_{\alpha},A_{\beta}\subseteq A_{\gamma}$.
\end{defn}

Put $\alpha\le\beta$ if $A_{\alpha}\subseteq A_{\beta}$. Then $\varGamma$
is a directed set and $A=\dlim A_{\alpha}$ is a direct limit of the
algebras $A_{\alpha}$. Recall that $A$ is called perfect algebra
if $AA=A.$
\begin{defn}
1. A local system is said to be \emph{perfect }(resp\emph{. simple,
semisimple, nilpotent, ... etc}) if it consists of perfect (simple,
semisimple, nilpotent, ... etc) algebras.

2. A locally finite algebra $A$ is called \emph{locally} \emph{perfect
}(resp\emph{. simple, semisimple, nilpotent, ... etc}) if it consists
a perfect (simple, semisimple, nilpotent, ... etc) local system. 
\end{defn}

\begin{example}
\label{exa:locally simple-1} Suppose that $A$ is locally simple.
Then there is a chain of simple subalgebras 
\[
A_{1}\subset A_{2}\subset A_{3}\subset\ldots
\]
of $A$ such that $A=\cup_{i=1}^{\infty}A_{i}$. We can view $A$
as the direct limit $\dlim A_{i}$ for the sequence 
\[
A_{1}\rightarrow A_{2}\rightarrow A_{3}\rightarrow\ldots
\]
of injective homomorphisms of finite dimensional simple associative
algebras $A_{i}$. Since $\bbF$ is algebraically closed, each $A_{i}$
can be identified with the algebra $M_{n_{i}}(\bbF)$ of all $n_{i}\times n_{i}$-matrices
over $\bbF$ for some $n_{i}$. Moreover, each embedding $A_{i}\rightarrow A_{i+1}$
can be written in the following matrix form 
\[
M\rightarrow diag(M,\ldots,M,0,\ldots,0),\,\,\,\,\,\,M\in M_{n_{i}}(\bbF).
\]
\end{example}

\section{Local Systems of Locally Finite Associative Algebras}
\begin{lem}
\label{lem:Local system} Let $A$ be a locally finite associative
algebra over $\bbF$ and let $\{A_{\alpha}\}_{\alpha\in\Gamma}$ be
a system of finitely generated subalgebras of $A$. Then $\{A_{\alpha}\}_{\alpha\in\Gamma}$
is a local system of $A$ if and only if for every finite dimensional
subspace $P$ of $A$ there exists $\beta\in\Gamma$ such that $P\subseteq A_{\beta}$.
\end{lem}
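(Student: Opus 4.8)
The plan is to prove both implications directly from the definitions, using local finiteness of $A$ to pass between finite subsets and finite-dimensional subspaces.

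\medskip

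\noindent\textbf{Proof.} ($\Leftarrow$) Suppose that for every finite-dimensional subspace $P$ of $A$ there exists $\beta\in\Gamma$ with $P\subseteq A_\beta$. First I would check that $A=\bigcup_{\alpha\in\Gamma}A_\alpha$: given $a\in A$, the one-dimensional subspace $\mathbb{F}a$ is contained in some $A_\beta$, hence $a\in A_\beta$. Next, given $\alpha,\beta\in\Gamma$, pick finite spanning sets of $A_\alpha$ and $A_\beta$ (these subalgebras are finite-dimensional, being finitely generated subalgebras of a locally finite algebra); let $P$ be the span of their union, a finite-dimensional subspace of $A$. By hypothesis $P\subseteq A_\gamma$ for some $\gamma\in\Gamma$, and then $A_\alpha,A_\beta\subseteq A_\gamma$. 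Thus $\{A_\alpha\}_{\alpha\in\Gamma}$ is a local system.

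\medskip

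\noindent($\Rightarrow$) Suppose $\{A_\alpha\}_{\alpha\in\Gamma}$ is a local system of $A$, and let $P$ be a finite-dimensional subspace of $A$ with basis $v_1,\dots,v_n$. Since $A=\bigcup_\alpha A_\alpha$, each $v_i$ lies in some $A_{\alpha_i}$. Using the directedness of $\Gamma$ (Definition of local system, iterated $n-1$ times), there is a single index $\gamma\in\Gamma$ with $A_{\alpha_1},\dots,A_{\alpha_n}\subseteq A_\gamma$; hence $v_1,\dots,v_n\in A_\gamma$, so $P\subseteq A_\gamma$. Taking $\beta=\gamma$ finishes the proof. $\qed$

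\medskip

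\noindent The argument is essentially bookkeeping; the only point requiring care is that finitely generated subalgebras of a locally finite associative algebra are automatically finite-dimensional, so that the "finite subset" and "finite-dimensional subspace" formulations coincide — this is where local finiteness of $A$ is used, and it is the one place where the hypothesis on $A$ (rather than just on the $A_\alpha$) genuinely enters. No real obstacle is expected; the statement is a routine reconciliation of two equivalent ways of phrasing the cofinality condition defining a local system.
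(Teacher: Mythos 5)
Your proof is correct and follows essentially the same route as the paper's: take a basis and use directedness for one implication, and apply the hypothesis to suitable finite-dimensional subspaces for the other. If anything, yours is slightly more complete --- the paper's $(\Leftarrow)$ direction merely asserts that directedness ``remains to be noted,'' whereas you actually derive it by applying the hypothesis to the span of $A_{\alpha}\cup A_{\beta}$, correctly observing that finitely generated subalgebras of a locally finite algebra are finite dimensional, which is exactly where the hypothesis on $A$ enters.
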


\begin{proof}
Suppose first that $\{A_{\alpha}\}_{\alpha\in\Gamma}$ is a local
system of $A$. Let $P$ be a finitely generated subalgebra of $A$
and let $\{p_{1},\ldots,p_{n}\}$ be a basis of $P$. Then $A=\cup_{\alpha\in\Gamma}A_{\alpha}$
and for each $1\le i\le n$, there is $A_{i}\in\{A_{\alpha}\}_{\alpha\in\Gamma}$
such that $p_{i}\in A_{i}$. Thus $P\subseteq A_{\beta}$ for some
$\beta\in\Gamma$, as required.

Suppose now that for every finite dimensional subspace $P$ of $A$
there exists $\beta\in\Gamma$ such that $P\subseteq A_{\beta}$.
We need to show that $\{A_{\alpha}\}_{\alpha\in\Gamma}$ is a local
system of $A$. Let $x\in A$. Then for every subspace $P_{x}$ generated
by $x$, there is $\beta\in\Gamma$ such that $P_{x}\subseteq A_{\beta}$,
so $A=\cup_{\alpha\in\Gamma}A_{\alpha}$. It remains to note that
for any $\alpha,\beta\in\Gamma$, there is $\gamma\in\Gamma$ such
that $A_{\alpha},A_{\beta}\subseteq A_{\gamma}$. 
\end{proof}
\begin{lem}
\label{lem:Sublocal} Suppose that $\{A_{\alpha}\}_{\alpha\in\Gamma}$
is a local system of a locally finite associative algebra $A$ over
$\bbF$. If $\Gamma=\cup_{i=1}^{r}\Gamma_{i}$, then $\{A_{\alpha}\}_{\alpha\in\Gamma_{k}}$
(for some $1\le k\le r$) is a local system of $A$.
\end{lem}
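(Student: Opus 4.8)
The plan is to argue by contradiction, with Lemma \ref{lem:Local system} as essentially the only tool. First I would observe that each $A_\alpha$ is finite dimensional, hence finitely generated as an algebra; therefore, for any subset $\Delta\subseteq\Gamma$, the subfamily $\{A_\alpha\}_{\alpha\in\Delta}$ is again a system of finitely generated subalgebras of $A$, so Lemma \ref{lem:Local system} applies to it: $\{A_\alpha\}_{\alpha\in\Delta}$ is a local system of $A$ if and only if every finite dimensional subspace of $A$ is contained in $A_\alpha$ for some $\alpha\in\Delta$.

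Next, suppose for contradiction that none of the families $\{A_\alpha\}_{\alpha\in\Gamma_k}$, $1\le k\le r$, is a local system of $A$. By the criterion just stated, for each $k$ there is a finite dimensional subspace $P_k$ of $A$ with $P_k\not\subseteq A_\alpha$ for all $\alpha\in\Gamma_k$. Put $P=P_1+\cdots+P_r$, which is still a finite dimensional subspace of $A$. Since $\{A_\alpha\}_{\alpha\in\Gamma}$ \emph{is} a local system, Lemma \ref{lem:Local system} gives some $\beta\in\Gamma$ with $P\subseteq A_\beta$. As $\Gamma=\cup_{i=1}^{r}\Gamma_i$, the index $\beta$ belongs to $\Gamma_k$ for some $k$; but then $P_k\subseteq P\subseteq A_\beta$ with $\beta\in\Gamma_k$, contradicting the choice of $P_k$. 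Hence at least one $\{A_\alpha\}_{\alpha\in\Gamma_k}$ must be a local system of $A$.

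I expect no genuinely hard step here: once Lemma \ref{lem:Local system} is available, this is just a finite pigeonhole. The only point needing a little care is the reduction to that lemma, i.e.\ noting that a subfamily of a local system, although it need not cover $A$ nor be directed on its own, is still a system of finitely generated subalgebras, so both implications of Lemma \ref{lem:Local system} may legitimately be invoked for it. Alternatively one could bypass Lemma \ref{lem:Local system} entirely and split the failure of $\{A_\alpha\}_{\alpha\in\Gamma_k}$ to be a local system into the cases ``it does not cover $A$'' (take $P_k$ one-dimensional through a missing element) and ``it is not directed'' (take $P_k=A_{\alpha_k}+A_{\beta_k}$ for a bad pair $\alpha_k,\beta_k\in\Gamma_k$), obtaining in each case a finite dimensional subspace not captured by $\Gamma_k$ and reaching the same contradiction; this is essentially the same argument unpackaged.
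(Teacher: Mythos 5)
Your proof is correct and follows essentially the same route as the paper: assume each $\{A_{\alpha}\}_{\alpha\in\Gamma_{k}}$ fails, extract a witness subspace $P_{k}$ for each $k$ via Lemma \ref{lem:Local system}, and derive a contradiction from the finite dimensional subspace $P=P_{1}+\dots+P_{r}$. If anything, your version is slightly cleaner, since you dispense with the paper's (unneeded) reduction to a disjoint union of the $\Gamma_{i}$ and state explicitly why $P\subseteq A_{\beta}$ with $\beta\in\Gamma_{k}$ contradicts the choice of $P_{k}$.
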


\begin{proof}
Suppose that $\Gamma=\cup_{i=1}^{r}\Gamma_{i}$. We may assume that
$\Gamma$ is a is a disjoint union of $\Gamma_{i}$ of $\Gamma$ (because
if it is not, then we can decompose it as a disjoint union of subsets).
We are going to prove by contradiction that $\{A_{\alpha}\}_{\alpha\in\Gamma_{k}}$,
for some $1\le k\le r$, is a local system of $A$. Assume to the
contrary that $\{A_{\alpha}\}_{\alpha\in\Gamma_{i}}$ is not a local
system of $A$ for each $1\le i\le r$. Then there is a finite dimensional
subspace $P_{i}$ of $A$ such that $P_{i}\notin\{A_{\alpha}\}_{\alpha\in\Gamma_{i}}$
for each $i$. Consider the subspace $P=\bigoplus_{i=1}^{r}P_{i}$
of $A$. Then $P$ is finite dimensional with $P\nsubseteq A_{\alpha}$
for all $\alpha\in\Gamma$ (because $\Gamma$ is a disjoint union
of the subsets $\Gamma_{i}$). Thus, $\{A_{\alpha}\}_{\alpha\in\Gamma}$
is not a local system of $A$, a contradiction.
\end{proof}
\begin{lem}
\label{lem:GammaP} Let $\{A_{\alpha}\}_{\alpha\in\Gamma}$ be a local
system of $A$ and let $P$ be a finite dimensional subspace of $A$.
Then $\{A_{\beta}\}_{\beta\in\Gamma_{P}}$ is a local system of $A$,
where $\Gamma_{P}=\{\beta\in\Gamma\mid P\subseteq A_{\beta}\}$.
\end{lem}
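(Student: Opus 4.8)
The plan is to verify the three defining properties of a local system directly for the subfamily $\{A_\beta\}_{\beta\in\Gamma_P}$, using Lemma~\ref{lem:Local system} to reduce everything to a single statement about finite dimensional subspaces. Concretely, since each $A_\beta$ is a finitely generated (indeed finite dimensional) subalgebra of $A$, Lemma~\ref{lem:Local system} tells us that $\{A_\beta\}_{\beta\in\Gamma_P}$ is a local system of $A$ if and only if every finite dimensional subspace $Q$ of $A$ is contained in some $A_\beta$ with $\beta\in\Gamma_P$.

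So let $Q$ be an arbitrary finite dimensional subspace of $A$. First I would form the finite dimensional subspace $Q + P$ of $A$ (it is finite dimensional since both summands are). Because $\{A_\alpha\}_{\alpha\in\Gamma}$ is a local system of $A$, Lemma~\ref{lem:Local system} applied to the original system gives some $\gamma\in\Gamma$ with $Q + P\subseteq A_\gamma$. In particular $P\subseteq A_\gamma$, so by definition $\gamma\in\Gamma_P$; and $Q\subseteq A_\gamma$. This exhibits the required index, and Lemma~\ref{lem:Local system} (in the reverse direction, now applied to the subfamily) concludes that $\{A_\beta\}_{\beta\in\Gamma_P}$ is a local system of $A$.

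There is essentially no obstacle here — the argument is a one-line observation once Lemma~\ref{lem:Local system} is in hand, the only mild point being to remember that $P$ itself must be swallowed along with $Q$ so that the chosen index genuinely lies in $\Gamma_P$, and to note that $\Gamma_P$ is nonempty (which follows from applying the original local-system property to $P$ alone). If one prefers to avoid citing Lemma~\ref{lem:Local system} one can instead check the axioms by hand: $A=\bigcup_{\beta\in\Gamma_P}A_\beta$ because every $x\in A$ lies in some $A_\gamma$ containing $P$ (apply the hypothesis to the subspace spanned by $x$ together with $P$), and given $\beta_1,\beta_2\in\Gamma_P$ the original directedness gives $\gamma$ with $A_{\beta_1},A_{\beta_2}\subseteq A_\gamma$, whence $P\subseteq A_{\beta_1}\subseteq A_\gamma$ forces $\gamma\in\Gamma_P$. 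Either route is routine; I would present the first, shorter one.
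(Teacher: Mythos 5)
Your argument is correct and is essentially the paper's own proof: the paper likewise forms $P_{2}=P+P_{1}$ for an arbitrary finite dimensional $P_{1}$, applies Lemma~\ref{lem:Local system} to the original system to land inside some $A_{\gamma}$ with $\gamma\in\Gamma_{P}$, and concludes that the subfamily is a local system. Your version is if anything slightly cleaner in explicitly invoking the converse direction of Lemma~\ref{lem:Local system} (and in checking directedness within $\Gamma_{P}$), where the paper instead cites Lemma~\ref{lem:Sublocal}.
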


\begin{proof}
By Lemma \ref{lem:Local system}, there is $\beta\in\Gamma$ such
that $A_{\beta}\supset P$. Let $P_{1}$ be a finite dimensional subalgebra
of $A$. Then $P_{2}=P+P_{1}$ is a finite dimensional subalgebra
of $A$, so by Lemma \ref{lem:Local system}, There is $\gamma\in\Gamma$
such that $A_{\gamma}\supset P_{2}$ with $\gamma\ge\beta$. Continuing
with this process we get the set $\Gamma_{P}=\{\beta\in\Gamma\mid A_{\beta}\supset P\}\subset\Gamma$.
By Lemma \ref{lem:Sublocal}, $\{A_{\beta}\}_{\beta\in\Gamma_{P}}$
is a local system of $A$. 
\end{proof}
\begin{prop}
\label{prop:Ideal } Let $\{A_{\alpha}\}_{\alpha\in\Gamma}$ be a
local system of a simple locally finite associative algebra $A$ over
$\bbF$. The following hold. 

1. \cite{shlaka2020} Let $\{I_{\alpha}\}_{\alpha\in\Gamma}$ be a
system of ideals such that $I_{\alpha}$ is an ideal of $A_{\alpha}$
for each $\alpha\in\Gamma$. Then either $\cap_{\alpha\in\Gamma}I_{\alpha}=0$
or for every $k\in\Gamma$ or there exists $\beta_{k}\in\Gamma$ such
that $A_{k}\subseteq I_{\beta_{k}}$.

2. Suppose that $\Gamma_{P}=\{\beta\in\Gamma\mid P\subseteq A_{\beta}\}$.
Then $\{A_{\beta}^{P}\}_{\beta\in\Gamma_{P}}$ is a local system of
$A$, where $A_{\beta}^{P}$ is the ideal of $A_{\beta}$ that generated
by the algebra $P$ for all $\beta\in\Gamma_{P}$. 
\end{prop}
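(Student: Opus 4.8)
The plan is to verify directly the two defining properties of a local system for the family $\{A_\beta^P\}_{\beta\in\Gamma_P}$: that it is directed under inclusion, and that its union is all of $A$. The first is elementary; the second is where simplicity of $A$ enters, through Part~1. I assume throughout that $P\neq0$, which is forced here: if $P=0$ then every $A_\beta^P$ is zero and the union cannot equal $A$.

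By Lemma~\ref{lem:GammaP}, the family $\{A_\beta\}_{\beta\in\Gamma_P}$ is itself a local system of $A$; in particular $\Gamma_P$ is directed with respect to $\beta\le\gamma\iff A_\beta\subseteq A_\gamma$, and every element of $A$ lies in $A_k$ for some $k\in\Gamma_P$. First I would record monotonicity: if $\beta\le\gamma$ in $\Gamma_P$ then $A_\beta^P\subseteq A_\gamma^P$. This is immediate from the standard description $A_\beta^P=P+A_\beta P+PA_\beta+A_\beta PA_\beta$ of the ideal of $A_\beta$ generated by $P$, since $A_\beta\subseteq A_\gamma$ forces each summand into the corresponding summand for $\gamma$; equivalently, $A_\gamma^P\cap A_\beta$ is an ideal of $A_\beta$ containing $P$ and hence contains $A_\beta^P$. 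Directedness of $\{A_\beta^P\}_{\beta\in\Gamma_P}$ then follows at once: given $\beta_1,\beta_2\in\Gamma_P$, choose $\gamma\in\Gamma_P$ with $A_{\beta_1},A_{\beta_2}\subseteq A_\gamma$ (possible as $\Gamma_P$ is directed), and apply monotonicity to obtain $A_{\beta_1}^P,A_{\beta_2}^P\subseteq A_\gamma^P$.

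For the covering property, I would apply Part~1 to the local system $\{A_\beta\}_{\beta\in\Gamma_P}$ and the system of ideals $\{A_\beta^P\}_{\beta\in\Gamma_P}$ (each $A_\beta^P$ being an ideal of $A_\beta$ by construction). Since the ideal generated by $P$ contains $P$, we have $0\neq P\subseteq\bigcap_{\beta\in\Gamma_P}A_\beta^P$, so the first alternative of Part~1 fails; hence for every $k\in\Gamma_P$ there is $\beta_k\in\Gamma_P$ with $A_k\subseteq A_{\beta_k}^P$. Now let $x\in A$. Using that $\{A_\beta\}_{\beta\in\Gamma_P}$ is a local system, choose $k\in\Gamma_P$ with $x\in A_k$; then $x\in A_k\subseteq A_{\beta_k}^P\subseteq\bigcup_{\beta\in\Gamma_P}A_\beta^P$. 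With the trivial inclusion $\bigcup_\beta A_\beta^P\subseteq A$ this gives $A=\bigcup_{\beta\in\Gamma_P}A_\beta^P$, and together with the directedness already shown it proves that $\{A_\beta^P\}_{\beta\in\Gamma_P}$ is a local system of $A$.

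I expect the only real obstacle to be the covering property, and it is already furnished by Part~1 (\cite{shlaka2020}), whose proof uses simplicity of $A$: a nonzero $z\in\bigcap_\alpha I_\alpha$ generates $A$ as a two-sided ideal, and expressing a basis of a given $A_k$ through $z$ uses only finitely many multipliers, which can all be absorbed into one sufficiently large member $A_{\beta_k}$ of the local system, whereupon the ideal property of $I_{\beta_k}$ yields $A_k\subseteq I_{\beta_k}$. Everything else is routine bookkeeping with the description of the ideal generated by a subset.
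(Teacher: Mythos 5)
Your proposal is correct and follows essentially the same route as the paper: invoke Lemma \ref{lem:GammaP} to get that $\{A_{\beta}\}_{\beta\in\Gamma_{P}}$ is a local system, observe $0\neq P\subseteq\cap_{\beta\in\Gamma_{P}}A_{\beta}^{P}$ so that Part~1 yields the covering property, and deduce directedness from that of $\{A_{\beta}\}_{\beta\in\Gamma_{P}}$. Your explicit justification of the monotonicity $A_{\beta}^{P}\subseteq A_{\gamma}^{P}$ via $A_{\beta}^{P}=P+A_{\beta}P+PA_{\beta}+A_{\beta}PA_{\beta}$, and your remark that $P\neq0$ must be assumed, merely make precise steps the paper leaves implicit.
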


\begin{proof}
1. This is proved in \cite{shlaka2020}. For the proof see \cite[Proposition 4.5]{shlaka2020}.

2. By Lemma \ref{lem:GammaP}, $\{A_{\beta}\}_{\beta\in\Gamma_{P}}$
is a local system of $A$. Let $A_{\beta}^{P}$ be the ideal of $A_{\beta}$
that generated by $P$. We need to show that $\{A_{\beta}^{P}\}_{\beta\in\Gamma_{P}}$
is a local system of $A$. Since $P\subseteq\cap_{\beta\in\Gamma_{P}}A_{\beta}^{P},$
we have $\cap_{\beta\in\Gamma_{P}}A_{\beta}^{P}\ne0$, so by (1),
for each $\beta\in\Gamma_{P}$, there is $\gamma\in\Gamma_{P}$ such
that $A_{\beta}\subseteq A_{\gamma}^{P}$. Thus, $L=\cup_{\beta\in\Gamma_{P}}A_{\beta}^{P}$.
It remain to note that $A_{\beta}^{P},A_{\gamma}^{P}\subseteq A_{\zeta}^{P}$.
Indeed, for each $\beta,\gamma\in\Gamma_{P}$, there is $\zeta\in\Gamma_{P}$
such that $A_{\beta},A_{\gamma}\subseteq A_{\zeta}$ because $\{A_{\beta}\}_{\beta\in\Gamma_{P}}$
is a local system of $A$. Therefore, $\{A_{\beta}^{P}\}_{\beta\in\Gamma_{P}}$
is a local system of $A$, as required. 
\end{proof}
\begin{prop}
\label{prop:Gamma_s local}Let $A$ be a simple locally finite associative
algebra and let $\{A_{\alpha}\}_{\alpha\in\Gamma}$ be a local system
of $A$. Let $S$ be a non-zero finite dimensional subspace of $A$
and let $A_{\alpha}^{s}$ be the ideal of $A_{\alpha}$ generated
by $S$. Then $\{A_{\beta}^{s}\}_{\beta\in\Gamma_{s}}$ is a local
system of $A$, where $\Gamma_{s}=\{\beta\in\Gamma\mid S\subseteq A_{\beta}\}$.
\end{prop}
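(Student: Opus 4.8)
The plan is to mirror the proof of Proposition~\ref{prop:Ideal }(2); the only new feature is that $S$ is a subspace rather than a subalgebra, and this causes no difficulty because the ideal of $A_\alpha$ generated by a set is obtained from that set using only left and right multiplications by $A_\alpha$. First, applying Lemma~\ref{lem:GammaP} with the finite dimensional subspace $S$ in place of $P$ shows that $\Gamma_s$ is non-empty and that $\{A_\beta\}_{\beta\in\Gamma_s}$ is itself a local system of $A$. It then remains to check the two defining properties of a local system for the family $\{A_\beta^s\}_{\beta\in\Gamma_s}$: that $A=\bigcup_{\beta\in\Gamma_s}A_\beta^s$, and that this family is directed by inclusion.

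For the covering property I would apply Proposition~\ref{prop:Ideal }(1) to the local system $\{A_\beta\}_{\beta\in\Gamma_s}$ of the simple algebra $A$, taking $I_\beta:=A_\beta^s$. Since $S\subseteq A_\beta^s$ for every $\beta\in\Gamma_s$, we have $0\neq S\subseteq\bigcap_{\beta\in\Gamma_s}A_\beta^s$, so the first alternative of Proposition~\ref{prop:Ideal }(1) cannot occur; hence the second holds, that is, for each $k\in\Gamma_s$ there is $\beta_k\in\Gamma_s$ with $A_k\subseteq A_{\beta_k}^s$. As $\bigcup_{k\in\Gamma_s}A_k=A$, this gives $\bigcup_{\beta\in\Gamma_s}A_\beta^s=A$. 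For directedness, given $\beta,\gamma\in\Gamma_s$ pick $\zeta\in\Gamma_s$ with $A_\beta,A_\gamma\subseteq A_\zeta$ (possible since $\{A_\beta\}_{\beta\in\Gamma_s}$ is a local system); writing $A_\delta^s=S+A_\delta S+SA_\delta+A_\delta SA_\delta$ for each $\delta\in\Gamma_s$, the inclusions $A_\beta\subseteq A_\zeta$ and $A_\gamma\subseteq A_\zeta$ immediately yield $A_\beta^s\subseteq A_\zeta^s$ and $A_\gamma^s\subseteq A_\zeta^s$, as wanted.

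I do not expect any genuine obstacle: the statement is a mild variant of Proposition~\ref{prop:Ideal }(2), and one can in fact reduce to the latter outright by letting $P$ be the subalgebra of $A$ generated by $S$, which is finite dimensional because $A$ is locally finite; then for any subalgebra $A_\beta$ one has $S\subseteq A_\beta$ if and only if $P\subseteq A_\beta$, so $\Gamma_s=\Gamma_P$, and the ideal of $A_\beta$ generated by $S$ equals the ideal generated by $P$, so $A_\beta^s=A_\beta^P$; Proposition~\ref{prop:Ideal }(2) then finishes the proof. The only points needing a little care are the explicit non-unital description of the generated ideal used in the directedness step, and noting that the hypothesis $S\neq 0$ is precisely what forces $\bigcap_{\beta\in\Gamma_s}A_\beta^s\neq 0$, so that Proposition~\ref{prop:Ideal }(1) delivers the useful alternative rather than the trivial one.
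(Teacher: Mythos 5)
Your proposal is correct and takes essentially the same route as the paper, whose entire proof is the one-line observation that the statement follows directly from Proposition \ref{prop:Ideal }; your reduction via the finite dimensional subalgebra $P$ generated by $S$ (so that $\Gamma_{s}=\Gamma_{P}$ and $A_{\beta}^{s}=A_{\beta}^{P}$) is exactly the intended deduction, and your longer first argument simply re-runs the proof of Proposition \ref{prop:Ideal }(2) with the explicit non-unital description of the generated ideal. No gaps.
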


\begin{proof}
This follows directly from Proposition \ref{prop:Ideal }.
\end{proof}
Recall that an associative algebra $A$ is nilpotent if there is a
positive integer $n$ such that $A^{n}=0$. Put $A=A^{1}$ and $A^{i}=AA^{i-1}$
for all $i>1$. 
\begin{defn}
1. We say that an associative algebra $P$ over a field $\bbF$ is
\emph{residually} \emph{nilpotent} if $\cap_{i=1}^{\infty}P^{i}=0$. 

2. We say that a locally finite associative algebra $A$ is \emph{residually}
\emph{nilpotent} if every finitely generated subalgebra $P$ of $A$
is residually nilpotent associative algebra.
\end{defn}

\begin{thm}
\label{thm:Residually nilpotent} Let $A$ be a simple locally finite
associative algebra over $\bbF$. Then every local system $\{A_{\alpha}\}_{\alpha\in\Gamma}$
contains an algebra which is not residually nilpotent.
\end{thm}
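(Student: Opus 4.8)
The plan is to argue by contradiction: suppose every member $A_{\alpha}$ of the given local system is residually nilpotent. Since each $A_{\alpha}$ is finite dimensional, the descending chain $A_{\alpha}\supseteq A_{\alpha}^{2}\supseteq A_{\alpha}^{3}\supseteq\cdots$ stabilises, and as $\bigcap_{i}A_{\alpha}^{i}=0$ it must stabilise at $0$; hence every $A_{\alpha}$ is in fact nilpotent. By Lemma \ref{lem:Local system} every finite dimensional subspace of $A$ lies in some $A_{\beta}$, so every finitely generated (hence finite dimensional) subalgebra of $A$ sits inside a nilpotent $A_{\beta}$ and is therefore itself nilpotent. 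Thus the whole statement reduces to showing that in a simple infinite dimensional locally finite associative algebra $A$ not every finitely generated subalgebra can be nilpotent.

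To do this I would first observe that $A$ is perfect. Indeed $A^{2}$ is a two-sided ideal of $A$, and it cannot be $0$ (otherwise every subspace of $A$ would be an ideal, which is impossible because $\dim A=\infty$), so $A^{2}=A$. A direct check, using $A^{2}=A$, then shows that for each $x\in A$ the subspace $Ax+xA+AxA$ is again a two-sided ideal of $A$; moreover it is nonzero whenever $x\neq0$, since the left annihilator $\{y\in A:Ay=0\}$ is a two-sided ideal which is proper (as $A^{2}=A\neq0$) and hence zero by simplicity. Now fix $0\neq x\in A$. By simplicity $Ax+xA+AxA=A$, so in particular $x\in Ax+xA+AxA$, i.e. $x=\sum_{i}a_{i}x+\sum_{j}xb_{j}+\sum_{k}c_{k}xd_{k}$ for finitely many $a_{i},b_{j},c_{k},d_{k}\in A$. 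Let $P$ be the subalgebra of $A$ generated by the finite set $\{x,a_{i},b_{j},c_{k},d_{k}\}$; by local finiteness $P$ is finite dimensional, and by the reduction above $P$ is nilpotent, say $P^{m}=0$.

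The final step is a bootstrap performed entirely inside the fixed algebra $P$. The identity $x=\sum_{i}a_{i}x+\sum_{j}xb_{j}+\sum_{k}c_{k}xd_{k}$ holds once and for all, and every element occurring in it lies in $P$; so whenever $x\in P^{t}$ for some $t\ge1$ we get $a_{i}x,\,xb_{j}\in P^{t+1}$ and $c_{k}xd_{k}\in P^{t+2}\subseteq P^{t+1}$, whence $x\in P^{t+1}$. Since $x\in P=P^{1}$, induction gives $x\in P^{t}$ for every $t$, so in particular $x\in P^{m}=0$, contradicting $x\neq0$. This contradiction shows that not all the $A_{\alpha}$ can be residually nilpotent.

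The only genuinely delicate point — and the reason the first thing one tries, namely using $A=A^{2}=A^{3}=\cdots$ to place $x$ inside every power $A^{n}$, does not lead anywhere — is that such an approach keeps enlarging the finite dimensional subalgebra needed to witness $x\in A^{n}$, so one never gets to invoke nilpotency. The device above is exactly what overcomes this: once the single relation expressing $x$ through the ideal it generates has been written down, it can be substituted back into itself to increase the ``depth'' of $x$ without ever leaving the one fixed nilpotent subalgebra $P$. I expect everything else to be routine.
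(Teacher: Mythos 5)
Your proof is correct, and it takes a genuinely different, more self-contained route than the paper's. The paper also argues by contradiction, but it works with the whole local system at once: it fixes a nonzero $p$, forms the system of ideals $\{A_{\alpha}^{n(\alpha)}\}$ where $n(\alpha)$ is the last power of $A_{\alpha}$ containing $p$, and invokes Proposition \ref{prop:Ideal }(1) (the dichotomy for systems of ideals in local systems of simple algebras, imported from \cite{shlaka2020}) to conclude that $p\notin A_{\alpha}^{2}$ for every $\alpha$, which contradicts $p\in A^{2}=A$ once the relation $p=\sum x_{i}y_{i}$ is localised to a single member $A_{\gamma}$. You instead first observe that for a finite dimensional algebra the chain of powers stabilises, so residual nilpotence coincides with nilpotence, reducing the theorem to the assertion that a simple locally finite algebra cannot have all its finitely generated subalgebras nilpotent; you then prove this directly by writing $x\in Ax+xA+AxA=A$ (a nonzero ideal, since the left annihilator of $A$ vanishes by simplicity and $A^{2}=A$), localising $x$ together with all the coefficients of the resulting identity into one finite dimensional --- hence, under the contradiction hypothesis, nilpotent --- subalgebra $P$, and bootstrapping the identity to force $x\in P^{t}$ for every $t$, whence $x\in P^{m}=0$. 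Your argument buys independence from Proposition \ref{prop:Ideal } and the external reference, and it isolates the genuinely local mechanism (one fixed relation substituted into itself inside one fixed nilpotent subalgebra); the paper's argument is shorter on the page but leans on the earlier machinery of systems of ideals. Both are valid proofs of the statement.
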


\begin{proof}
Let $\{A_{\alpha}\}_{\alpha\in\Gamma}$ be a local system of $A$.
Consider the algebra $P=\bbF p$, where $p\in A$ is non-zero. Put
$\Gamma_{p}=\{\alpha\in\Gamma\mid A_{\alpha}\supset P\}$. By Lemma
\ref{lem:GammaP}, $\{A_{\alpha}\}_{\alpha\in\Gamma_{p}}$ is a local
system of $A$. Assume to the contrary that all $A_{\alpha}$ are
residually nilpotent for all $\alpha\in\Gamma$. Then for each $\alpha\in\Gamma_{p}$,
there is a positive integer $n$ such that $p\in A_{\beta}^{n}$ and
$p\notin A_{\beta}^{n+1}$, so we get system of ideals $\{A_{\alpha}^{n}\}_{\alpha\in\Gamma_{p}}$
with $\cap_{\alpha\in\Gamma_{P}}A_{\alpha}^{n}\ne0$. By Proposition
\ref{prop:Ideal }, for each $\alpha\in\Gamma_{p}$, there is $\beta\in\Gamma_{p}$
such that $A_{\alpha}\subseteq A_{\beta}^{n}$. Thus, $A_{\alpha}^{2}\subseteq A_{\beta}^{n+1}$,
so $p\notin A_{\alpha}^{2}$ for all $\alpha\in\Gamma$. Since $p\in A^{2}$
(because $A=A^{2}$ as $A$ is simple), there exists $\gamma\in\Gamma$
such that $p=\sum_{i=1}^{n}x_{i}y_{i}$ for some $x_{i},y_{i}\in A_{\gamma}$
($1\le i\le n$). Thus, $p\in A_{\gamma}^{2}$, a contradiction with
$p\notin A_{\alpha}^{2}$ for each $\alpha\in\Gamma$. Therefore,
every local system of $A$ must contain an algebra which is not residually
nilpotent, as required.
\end{proof}
\begin{cor}
\label{cor:Residualy nilpotent} No simple locally finite associative
algebra can be locally residually nilpotent.
\end{cor}

\begin{proof}
This follows from Theorem \ref{thm:Residually nilpotent}.
\end{proof}

\section{Perfect local Systems}

Recall that a local system is said to be \emph{perfect }(resp\emph{.
simple, semisimple, nilpotent, ... etc}) if it consists of perfect
(simple, semisimple, nilpotent, ... etc) algebras.
\begin{thm}
\label{thm:Perfect local system} Any simple locally finite associative
algebra posses a perfect local system.
\end{thm}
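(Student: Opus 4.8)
The plan is to take an arbitrary local system $\{A_\alpha\}_{\alpha\in\Gamma}$ of $A$ and refine it to one whose members are perfect. The natural candidate replacement for $A_\alpha$ is the smallest ``eventually stabilising'' ideal under the descending chain $A_\alpha\supseteq A_\alpha^2\supseteq A_\alpha^3\supseteq\cdots$. Since each $A_\alpha$ is finite dimensional, this chain stabilises: there is $n=n(\alpha)$ with $A_\alpha^n=A_\alpha^{n+1}=\cdots=:B_\alpha$, and $B_\alpha$ is by construction a perfect subalgebra (indeed $B_\alpha B_\alpha=A_\alpha^{2n}=A_\alpha^n=B_\alpha$), and it is an ideal of $A_\alpha$. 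So the candidate local system is $\{B_\alpha\}_{\alpha\in\Gamma}$, or more precisely $\{B_\alpha\}_{\alpha\in\Gamma_P}$ for a suitable cofinal subset as in Lemma~\ref{lem:GammaP} and Proposition~\ref{prop:Ideal }.

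First I would verify that $\{B_\alpha\}$ covers $A$. Pick $0\ne p\in A$ and set $P=\bbF p$. By Theorem~\ref{thm:Residually nilpotent} the local system $\{A_\alpha\}_{\alpha\in\Gamma_p}$ (which is again a local system of $A$ by Lemma~\ref{lem:GammaP}, where $\Gamma_p=\{\alpha\mid p\in A_\alpha\}$) cannot consist entirely of residually nilpotent algebras; moreover the argument in Theorem~\ref{thm:Residually nilpotent} shows more, namely that for the fixed element $p$ we cannot have $p\notin B_\alpha$ for all $\alpha\in\Gamma_p$. Indeed, if $p\notin B_\alpha$ for every such $\alpha$, then running the stabilisation argument exactly as in the proof of Theorem~\ref{thm:Residually nilpotent} — using Proposition~\ref{prop:Ideal }(1) applied to the system of ideals $\{A_\alpha^{n(\alpha)}\}$, whose intersection is nonzero because $p$ lies in none of them is contradicted only if... — let me instead phrase it directly: the set $\{n(\alpha)\}$ gives ideals $B_\alpha=A_\alpha^{n(\alpha)}$ with $\bigcap_\alpha B_\alpha \ne 0$ would force, by Proposition~\ref{prop:Ideal }(1), that each $A_k$ sits inside some $B_{\beta_k}$, hence $A_k^2\subseteq B_{\beta_k}^2=B_{\beta_k}$... and then $p\in A_\gamma^2$ for some $\gamma$ (as $A=A^2$) lands $p$ in some $B_\beta$, a contradiction. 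So there is $\beta\in\Gamma_p$ with $p\in B_\beta$, and therefore $A=\bigcup_\alpha B_\alpha$.

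Next I would check the directedness condition: given $B_\alpha$ and $B_\beta$, choose $\gamma$ with $A_\alpha,A_\beta\subseteq A_\gamma$. This does not immediately give $B_\alpha\subseteq B_\gamma$, which is the main obstacle — the stabilised ideal is not obviously monotone in the algebra, since $A_\alpha\subseteq A_\gamma$ need not imply $A_\alpha^n\subseteq A_\gamma^m$ for the relevant $n,m$. The fix is to work inside $\Gamma_P$ for a growing finite-dimensional subspace $P$ and invoke Proposition~\ref{prop:Ideal }(2)/Proposition~\ref{prop:Gamma_s local}: for each finite set of generators we can find an index whose stabilised ideal already contains it, because $p\in B_\beta$ was shown above for each single $p$, and finite-dimensionality lets us handle finitely many generators at once by passing to a common upper bound $A_\gamma$ and then noting $B_\gamma$ absorbs all of them once $\gamma$ is taken large enough in $\Gamma_P$. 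Concretely, I would fix $\alpha,\beta$, pick a basis of $B_\alpha+B_\beta$, apply the covering statement to each basis vector to get indices in $\Gamma$, take a common $A_\delta$ above $A_\alpha,A_\beta$ and all those, and then enlarge $\delta$ within $\Gamma_{B_\alpha+B_\beta}$ so that $B_\delta\supseteq B_\alpha+B_\beta$; this uses that along a chain $A_\delta\subseteq A_{\delta'}\subseteq\cdots$ with all of them containing the fixed finite-dimensional space $B_\alpha+B_\beta$, the corresponding ideals $B_{\delta}$ eventually engulf that space, which is precisely the content of Proposition~\ref{prop:Ideal }(1) applied to $\{A_\mu^{n(\mu)}\}_{\mu\in\Gamma_{B_\alpha+B_\beta}}$ having nonzero intersection. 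Assembling these three points — perfectness of each $B_\alpha$, covering, and directedness — shows $\{B_\alpha\}$ (over the appropriate index set) is a perfect local system of $A$, completing the proof.
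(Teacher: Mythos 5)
Your overall strategy --- replacing each $A_\alpha$ by the stabilised power $B_\alpha=A_\alpha^{n(\alpha)}=\bigcap_{i\ge1}A_\alpha^i$ --- is exactly the paper's construction (there written $A_\alpha^\infty$), and the observation that each $B_\alpha$ is a perfect ideal of $A_\alpha$ is correct. But both remaining verifications go wrong in your write-up. The directedness ``obstacle'' you describe is illusory: if $A_\alpha\subseteq A_\gamma$ then $A_\alpha^i\subseteq A_\gamma^i$ for every $i$ (immediate by induction), so for $m\ge\max(n(\alpha),n(\gamma))$ one has $B_\alpha=A_\alpha^m\subseteq A_\gamma^m=B_\gamma$; thus $\alpha\mapsto B_\alpha$ is monotone and directedness is inherited from $\Gamma$ in one line, which is precisely what the paper does. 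Your detour through $\Gamma_{B_\alpha+B_\beta}$ and Proposition \ref{prop:Ideal }(1) is unnecessary and, as written, unsupported, since it rests on an unproved claim that a certain intersection of ideals is nonzero.

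The covering step is where the genuine gap lies. Proposition \ref{prop:Ideal }(1) is a dichotomy: \emph{either} $\bigcap_\alpha B_\alpha=0$ \emph{or} every $A_k$ embeds in some $B_{\beta_k}$. You only treat the second horn and never rule out $\bigcap_\alpha B_\alpha=0$; the hypothesis that $p\notin B_\alpha$ for all $\alpha$ certainly does not make that intersection nonzero, contrary to what your first attempted phrasing suggests, and the subsequent step from $p\in A_\gamma^2$ to $p\in B_\gamma$ also fails unless $n(\gamma)\le 2$. The clean way to finish, and the one the paper takes, avoids the dichotomy altogether: by the monotonicity above, $A^\infty:=\bigcup_\alpha B_\alpha$ is a subalgebra of $A$, and it is an ideal of $A$ because each $B_\alpha$ is an ideal of $A_\alpha$ (given $x\in B_\alpha$ and $a\in A$, pick $\gamma$ with $A_\alpha\subseteq A_\gamma$ and $a\in A_\gamma$; then $x\in B_\alpha\subseteq B_\gamma$, so $ax,xa\in B_\gamma\subseteq A^\infty$). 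By Theorem \ref{thm:Residually nilpotent} some $A_\alpha$ is not residually nilpotent, i.e.\ some $B_\alpha\ne0$, so $A^\infty\ne0$, and simplicity of $A$ forces $A^\infty=A$. With the covering step repaired in this way your argument becomes the paper's proof.
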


\begin{proof}
Let $\{A_{\alpha}\}_{\alpha\in\Gamma}$ be a local system of $A$.
Put 
\[
A_{\alpha}^{\infty}=\cap_{i=1}^{\infty}A_{\alpha}^{i}.
\]
 Then $A_{\alpha}^{\infty}$ is a perfect subalgebra of $A^{\infty}$.
Moreover, $\{A_{\alpha}^{\infty}\}_{\alpha\in\Gamma}$ is a local
system of the subalgebra $A^{\infty}=\cup_{\alpha\in\Gamma}A_{\alpha}^{\infty}$
of $A$. Indeed, if $\alpha,\beta\in\Gamma$, then there is $\gamma\in\Gamma$
such that $A_{\alpha},A_{\beta}\subseteq A_{\gamma}$, so $A_{\alpha}^{\infty},A_{\beta}^{\infty}\subseteq A_{\gamma}^{\infty}$.
Note that $A^{\infty}$ is an ideal of $A$ and $A^{\infty}\ne0$
(by Theorem \ref{thm:Residually nilpotent}), but $A$ is simple,
so $A^{\infty}=A$. Therefore, $A$ contains a perfect local system,
as required. 
\end{proof}
\begin{lem}
\label{lem:BavStr=000026BavRow} Let $A$ be a simple locally finite
associative algebra over $\bbF$. The following holds: 
\end{lem}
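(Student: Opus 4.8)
Granting that the statement concerns the structure of the local systems of a simple locally finite associative algebra $A$ in the spirit of \cite{bahStr1995} and \cite{BavRow2013} --- ultimately that $A$ admits a local system whose members are full matrix algebras over $\bbF$ with embeddings in the standard block-diagonal form of Example~\ref{exa:locally simple-1} --- the plan is to bootstrap from the perfect local system furnished by Theorem~\ref{thm:Perfect local system}, refining it first to a semisimple, then to a simple, and finally to a ``standard'' local system, using Wedderburn--Malcev theory and the Skolem--Noether theorem (both available since $\bbF$ is algebraically closed, hence perfect). Since $A$ is countable-dimensional, I would at the outset replace any local system by a cofinal ascending chain $A_{1}\subseteq A_{2}\subseteq\cdots$ with $A=\bigcup_{i}A_{i}$, legitimate by Lemma~\ref{lem:Local system} (adjoin a member containing a growing finite-dimensional subspace that exhausts $A$); by Lemmas~\ref{lem:Sublocal} and \ref{lem:GammaP}, the refinements and cofinal subchains built below remain local systems of $A$.

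The first and, I expect, hardest step is to absorb the radicals. Take the chain above to consist of \emph{perfect} subalgebras, so $A_{i}=A_{i}^{2}$. Given a nonzero $x\in\rad(A_{i})$, simplicity of $A$ (which also gives $A=A^{2}$) forces the two-sided ideal generated by $x$ to be all of $A$, so $A=AxA$; thus every basis vector of $A_{i}$ is a finite sum of products $uxv$ with $u,v\in A$, and by local finiteness these factors all lie in some $A_{j}\supseteq A_{i}$. If $x$ lay in $\rad(A_{j})$ we would get $A_{i}\subseteq A_{j}xA_{j}\subseteq\rad(A_{j})$, making $A_{i}$ nilpotent and contradicting perfectness; hence $x\notin\rad(A_{j})$. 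Combined with the elementary fact that $\rad(A_{j})\cap A_{i}$ is a nil ideal of $A_{i}$, hence contained in $\rad(A_{i})$, this gives $\rad(A_{j})\cap A_{i}\subsetneq\rad(A_{i})$; iterating, and invoking that $A$ has no nonzero nil ideals (a consequence of Theorem~\ref{thm:Residually nilpotent} and Corollary~\ref{cor:Residualy nilpotent}), one arranges a cofinal subchain along which the radicals are swallowed, and then --- passing to Wedderburn--Malcev complements and making them nested by Malcev's conjugacy theorem inside the larger members --- obtains a semisimple chain $S_{1}\subseteq S_{2}\subseteq\cdots$ with union $A$. The delicate point is coherence of the inductive choices: each enlargement must keep the subalgebras nested and the resulting subsystem cofinal, and this is the only place where genuine use of simplicity of $A$, rather than finite-dimensional structure theory alone, seems unavoidable.

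Once a semisimple local system is in hand, the remainder is routine. Because $\bbF$ is algebraically closed, $S_{i}\cong\bigoplus_{k}M_{n_{ik}}(\bbF)$; using simplicity of $A$ to embed each $S_{i}$ into a full matrix subalgebra of a large enough member (equivalently, to conjugate and glue the block idempotents appropriately), one passes to a chain $A_{1}'\subseteq A_{2}'\subseteq\cdots$ with $A_{i}'\cong M_{n_{i}}(\bbF)$, so that $A$ is locally simple. Finally, applying the Skolem--Noether theorem to each inclusion $A_{i}'\hookrightarrow A_{i+1}'$ of central simple $\bbF$-algebras produces an inner automorphism of $A_{i+1}'$ after which the embedding becomes the standard map $M\mapsto\operatorname{diag}(M,\ldots,M,0,\ldots,0)$ of Example~\ref{exa:locally simple-1}; composing these conjugations normalises the whole chain, and any remaining assertions of the statement --- on identity elements of the $A_{i}'$, on the induced filtration of $A$, or on compatible involutions --- then follow by inspection of this standard form.
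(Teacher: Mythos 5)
You have proved the wrong statement. Lemma \ref{lem:BavStr=000026BavRow} is the radical-avoidance result of Bahturin--Strade and Baranov--Rowley: (1) for every member $A_{\alpha}$ of a local system there is a larger member $A_{\zeta}$ with $A_{\alpha}\cap\rad A_{\zeta}=0$, and (2) for a \emph{perfect} local system this holds eventually, i.e.\ $\rad A_{\beta}\cap A_{\alpha}=0$ for all $\beta\ge\alpha'$. You instead target the much stronger claim that $A$ admits a local system of full matrix algebras with standard block-diagonal embeddings. That is not what the lemma asserts, and it is not obtainable from the tools in this paper: the passage from a perfect local system to a nested semisimple one breaks down precisely at the step you yourself flag as ``delicate'' --- Malcev conjugacy lets you move one Levi subalgebra of $A_{j}$ onto another, but it does not let you choose a Levi subalgebra of $A_{j}$ containing the previously fixed Levi subalgebra of $A_{i}$ while keeping the whole chain nested and cofinal. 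The subsequent Skolem--Noether normalisation is therefore aimed at a conclusion (local simplicity of $A$) that neither the lemma nor the paper claims.

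That said, your ``first and hardest step'' is essentially a correct proof of the lemma in the perfect case, and by a genuinely different route from the paper's. You use simplicity in the form $A=AxA$ for $0\ne x\in\rad A_{i}$, place the finitely many factors in some $A_{j}\supseteq A_{i}$, and deduce $x\notin\rad A_{j}$ from perfectness of $A_{i}$; since $\rad A_{j'}\cap A_{i}\subseteq\rad A_{j}\cap A_{i}$ for $j'\ge j$, a dimension count on the finite-dimensional space $\rad A_{j}\cap A_{i}$ yields part (2). The paper argues instead via Proposition \ref{prop:Ideal }(1) applied to the system of radicals, getting $\cap_{\beta}\rad A_{\beta}=0$ from Theorem \ref{thm:Residually nilpotent}, and then a finite-intersection argument inside $A_{\alpha}$. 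Two caveats on your version: part (1) of the lemma is stated for an \emph{arbitrary} local system, where $A_{i}$ may be nilpotent, so your contradiction ``$A_{i}\subseteq\rad A_{j}$, hence $A_{i}$ nilpotent, contradicting perfectness'' is unavailable there; and the identity $A=AxA$ needs the small justification $A=A^{3}$, which does follow from simplicity. Had you stopped after this step and stated it as the conclusion, the proposal would have been an acceptable alternative proof of part (2).
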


\begin{enumerate}
\item \emph{\cite{bahStr1995} Let $\{A_{\alpha}\}_{\alpha\in\Gamma}$ be
a local system of $A$. Then for every $\alpha\in\Gamma$, there is
$\zeta\in\Gamma$ such that $A_{\alpha}\subset A_{\zeta}$ and $A_{\alpha}\bigcap\rad A_{\zeta}=0$.}
\item \emph{\cite{BavRow2013} If $\{A_{\alpha}\}_{\alpha\in\Gamma}$ is
a perfect local system of $A$, then there exists $\alpha'\in\Gamma$
for each $\alpha\in\Gamma$ such that $\rad A_{\beta}\bigcap A_{\alpha}=0$
for all $\beta\ge\alpha'$.}
\end{enumerate}
\begin{proof}
1. Let $A_{\alpha}\in\{A_{\alpha}\}_{\alpha\in\Gamma}$. Suppose that
$\Gamma_{\alpha}=\{\beta\in\Gamma\mid A_{\alpha}\subseteq A_{\beta}\}$.
By Lemma \ref{lem:GammaP}, $\{A_{\beta}\}_{\beta\in\Gamma_{P}}$
is a local system of $A$. Let $\{\rad A_{\beta}\}_{\beta\in\Gamma_{\alpha}}$
be the system of the radicals of $\{A_{\beta}\}_{\beta\in\Gamma_{\alpha}}$
such that $\rad A_{\beta}$ is the radical of $A_{\beta}$. By Theorem
\ref{thm:Residually nilpotent}, not all members of $\{A_{\beta}\}_{\beta\in\Gamma_{P}}$
are nilpotent algebras, so there exists $\gamma\in\Gamma_{\alpha}$
such that $A_{\gamma}\nsubseteq\rad A_{\beta}$ for all $\beta\in\Gamma_{\alpha}$.
Thus, $\cap_{\beta\in\Gamma_{\alpha}}\rad A_{\beta}=0$ (by Proposition
\ref{prop:Ideal }). 

Now, we have $A_{\alpha}\cap\rad A_{\beta}\subseteq\rad A_{\alpha}$
for all $\beta\in\Gamma_{\alpha}$. Since $\dim(\rad A_{\beta})$
is finite, there exist $\beta_{1},\beta_{2},\ldots,\beta_{n}$ ($n\ge1$)
such that $A_{\alpha}\cap(\cap_{i=1}^{n}\rad A_{\beta_{i}})=0$. Now,
take any $\zeta\in\Gamma_{\alpha}$ such that $A_{\zeta}\supseteq A_{\beta_{1}},\ldots,A_{\beta_{n}}$.
Then $A_{\beta_{i}}\cap\rad A_{\zeta}\subseteq\rad A_{\beta_{i}}$
for all $1\le i\le n$, so 
\[
A_{\alpha}\cap\rad A_{\zeta}\subseteq A_{\beta_{i}}\cap\rad A_{\zeta}\subseteq\rad A_{\beta_{i}},\,\,\,\,\text{for all}\,\,\,1\le i\le n.
\]
Therefore, 
\[
A_{\alpha}\cap\rad A_{\zeta}\subseteq A_{\alpha}\cap(\cap_{i=1}^{n}\rad A_{\beta_{i}})=0,
\]
as required. 

2. This follows from part (1.).
\end{proof}
\begin{thm}
\label{thm:Maximal ideal} Let $\{A_{\alpha}\}_{\alpha\in\Gamma}$
be a local system of a simple locally finite associative algebra $A$
over $\bbF$. Then for each $\alpha\in\Gamma$, there is $\gamma\in\Gamma$
such that $A_{\alpha}\subset A_{\gamma}$ and $M_{\gamma}\cap A_{\alpha}=0$
for some maximal ideal $M_{\gamma}$ of $A_{\gamma}$.
\end{thm}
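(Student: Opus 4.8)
The plan is to build on Lemma~\ref{lem:BavStr=000026BavRow}(1) (the Bahturin--Strade property) together with the structure theory of finite dimensional associative algebras. Fix $\alpha\in\Gamma$. First I would apply Lemma~\ref{lem:BavStr=000026BavRow}(1) to obtain $\zeta\in\Gamma$ with $A_\alpha\subset A_\zeta$ and $A_\alpha\cap\rad A_\zeta=0$. The idea is then to pass to the semisimple quotient: write $\bar A_\zeta=A_\zeta/\rad A_\zeta$, which is a semisimple finite dimensional associative algebra, hence by Wedderburn a direct product $\bar A_\zeta=\prod_{j=1}^m B_j$ of simple algebras (matrix algebras over $\bbF$). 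The condition $A_\alpha\cap\rad A_\zeta=0$ means the composite $A_\alpha\hookrightarrow A_\zeta\twoheadrightarrow\bar A_\zeta$ is injective, so $A_\alpha$ embeds into $\prod_j B_j$.

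The key step is to produce a \emph{maximal} ideal of $A_\zeta$ (not merely the radical) that meets $A_\alpha$ trivially. A maximal ideal of $A_\zeta$ is the preimage under $A_\zeta\to\bar A_\zeta$ of a maximal ideal of $\bar A_\zeta$, i.e. of $\prod_{j\ne k} B_j$ for some index $k$; call this preimage $M_\zeta^{(k)}$. We have $M_\zeta^{(k)}\cap A_\alpha = $ (the inverse image in $A_\alpha$ of the kernel of the projection $A_\alpha\to B_k$). So the task reduces to: some simple component $B_k$ has the property that the projection $\pi_k\colon A_\alpha\to B_k$ is injective. This need not hold for $A_\zeta$ itself --- $A_\alpha$ might genuinely spread across several components --- so the plan is to enlarge $A_\zeta$ once more. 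Since $A$ is simple and locally finite, and $A_\alpha$ is finite dimensional, I would choose a single finite dimensional simple (or at least "large enough") subalgebra of $A$ containing $A_\alpha$; concretely, using that $A$ is locally simple-like behaviour is not available in general, so instead I would argue as follows. Among the finitely many projections $\pi_j\colon A_\alpha\to B_j$ with $\zeta$ as above, pick one with the smallest kernel, say $\ker\pi_k=K$. If $K=0$ we are done with $\gamma=\zeta$ and $M_\gamma=M_\zeta^{(k)}$. If $K\ne0$, then $K$ is a nonzero finite dimensional subspace of $A$; invoke Proposition~\ref{prop:Gamma_s local} (or Lemma~\ref{lem:GammaP}) to find a local subsystem indexed by those $\beta$ with $A_\zeta\subseteq A_\beta$, apply Lemma~\ref{lem:BavStr=000026BavRow}(1) to $A_\zeta$ to get $A_\zeta\cap\rad A_{\zeta'}=0$, and re-examine the components of $A_{\zeta'}/\rad A_{\zeta'}$. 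The point is that passing to a larger algebra can only split the image of $A_\alpha$ further, strictly decreasing the minimal kernel, and since these kernels are finite dimensional subspaces of $A_\alpha$ this descent terminates; at termination the minimal kernel is $0$, giving the required $\gamma$ and maximal ideal $M_\gamma$.

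I expect the main obstacle to be justifying that the descent actually \emph{strictly} decreases the minimal kernel --- a priori, enlarging $A_\zeta$ to $A_{\zeta'}$ could refine the Wedderburn decomposition in a way that leaves every projection of $A_\alpha$ with the same kernel, if $A_\alpha$'s image happens to be "diagonal" with respect to the new components. To handle this I would use simplicity of $A$ more forcefully: if $K=\ker\pi_k\neq 0$ were contained in the kernel of \emph{every} projection of $A_\alpha$ arising from \emph{every} $A_\beta\supseteq A_\alpha$ in the local system, then $K$ would lie in $\rad A_\beta$ for all such $\beta$, whence $K\subseteq\bigcap_\beta\rad A_\beta=0$ by Proposition~\ref{prop:Ideal }(1) and the non-residual-nilpotence Theorem~\ref{thm:Residually nilpotent} --- contradiction. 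This shows there is always a $\beta$ and a component of $A_\beta/\rad A_\beta$ whose projection kills strictly less of $A_\alpha$, which is exactly what powers the finite descent. Assembling these pieces --- Lemma~\ref{lem:BavStr=000026BavRow}(1) to kill the radical, Wedderburn to split into simple components, and Proposition~\ref{prop:Ideal }(1) with Theorem~\ref{thm:Residually nilpotent} to run the descent on kernels --- yields $\gamma\in\Gamma$ with $A_\alpha\subset A_\gamma$ and a maximal ideal $M_\gamma$ of $A_\gamma$ with $M_\gamma\cap A_\alpha=0$.
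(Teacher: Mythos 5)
Your opening move coincides with the paper's: both arguments first invoke Lemma \ref{lem:BavStr=000026BavRow}(1) to produce $A_{\zeta}\supset A_{\alpha}$ with $A_{\alpha}\cap\rad A_{\zeta}=0$, and your reformulation of the goal --- find a single Wedderburn component of some $A_{\gamma}/\rad A_{\gamma}$ onto which $A_{\alpha}$ projects injectively, then take $M_{\gamma}$ to be the preimage of the product of the remaining components --- is a correct reduction. The genuine gap is exactly at the point you flag: termination of the descent. Your proposed fix shows that a nonzero $K=\ker\pi_{k}$ cannot lie in the kernel of \emph{every} projection arising from every larger $A_{\beta}$ (indeed the intersection of those kernels is $A_{\alpha}\cap\rad A_{\beta}$, which vanishes for suitable $\beta$ by Lemma \ref{lem:BavStr=000026BavRow}(1)). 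But ``some new kernel does not contain $K$'' is strictly weaker than ``some new kernel is smaller than $K$''. The kernels attached to $A_{\beta}$ are not refinements of those attached to $A_{\zeta}$: a simple component of $A_{\beta}/\rad A_{\beta}$ may restrict to $A_{\zeta}$ diagonally across several of its components, so the new kernels need not be comparable to the old ones at all. For instance, if $A_{\alpha}=\langle x,y\rangle$ has kernels $\langle x\rangle,\langle y\rangle$ at stage $\zeta$ and kernels $\langle y\rangle,\langle x+y\rangle$ at the next stage, then $K=\langle x\rangle$ is contained in no new kernel, yet the minimal kernel dimension is still $1$; nothing in your argument prevents this from recurring indefinitely. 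No monovariant decreases, so the descent is not known to terminate.

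The paper closes this hole with a global pigeonhole in place of a descent: it fixes the finitely many minimal perfect ideals $P_{\zeta_{k}}$ and maximal ideals $M_{\zeta_{l}}$ of $A_{\zeta}$, partitions the cofinal index set $\Gamma^{\zeta}$ into the finitely many classes $\Gamma_{k,l}^{\zeta}$ (those $\beta$ admitting a proper ideal $I_{\beta}$ with $P_{\zeta_{k}}\subset I_{\beta}\cap A_{\zeta}\subset M_{\zeta_{l}}$) together with the class $\Gamma_{0}^{\zeta}$ on which every proper ideal of $A_{\beta}$ meets $A_{\zeta}$ inside $\rad A_{\zeta}$, and applies Lemma \ref{lem:Sublocal}: one class must be a local system. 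Each $\Gamma_{k,l}^{\zeta}$ is excluded by Proposition \ref{prop:Ideal }(1), since the associated system of ideals $\{I_{\beta}\}$ has nonzero intersection (it contains $P_{\zeta_{k}}$) while no $I_{\delta}$ can contain any $A_{\beta}\supseteq A_{\zeta}$. Hence $\Gamma_{0}^{\zeta}$ is a local system, and any $\gamma$ in it with any maximal ideal $M_{\gamma}$ satisfies $M_{\gamma}\cap A_{\alpha}\subseteq\rad A_{\zeta}\cap A_{\alpha}=0$. If you wish to keep your component-wise formulation, you need to replace the dimension descent by a dichotomy argument of this type applied to the maximal ideals themselves, rather than tracking kernel dimensions.
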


\begin{proof}
By Lemma \ref{lem:BavStr=000026BavRow}, $A$ has a perfect local
system. Suppose that $\{A_{\alpha}\}_{\alpha\in\Gamma}$ is a perfect
local system of $A$. Fix any $\alpha\in\Gamma$. By Lemma \ref{lem:BavStr=000026BavRow}(2),
there is $\zeta\in\Gamma$ such that 
\begin{equation}
A_{\alpha}\subset A_{\zeta}\hspace*{1em}\text{and}\hspace*{1em}A_{\alpha}\cap\rad A_{\zeta}=0.
\end{equation}
Let $\Gamma^{\zeta}=\{\beta\in\Gamma\mid\beta\ge\zeta\}$. Then by
Lemma \ref{lem:Sublocal}, 
\[
\{A_{\beta}\}_{\beta\in\Gamma^{\zeta}}=\{A_{\beta}\in\{A_{\alpha}\}_{\alpha\in\Gamma}\mid A_{\zeta}\subset A_{\beta}\}
\]
is a local system of $A$. Let $\{P_{\zeta_{1}},\ldots,P_{\zeta_{t}}\}$
and $\{M_{\zeta_{1}},\ldots,M_{\zeta_{r}}\}$ be the set of all minimal
perfect and maximal ideals of $A_{\zeta}$, respectively. Consider
two subsets $\Gamma_{l,k}^{\zeta}$ and $\Gamma_{0}^{\zeta}$ of $\Gamma^{\zeta}$
defined as follows: 
\[
\Gamma_{k,l}^{\zeta}=\{\beta\in\Gamma^{\zeta}\mid A_{\beta}\hspace{1em}\text{has an ideal}\hspace*{1em}I_{\beta}\hspace*{1em}\text{with}\hspace*{1em}P_{\zeta_{k}}\subset I_{\beta}\cap A_{\zeta}\subset M_{\zeta_{l}},1\le k\le t,1\le l\le r\};
\]
\[
\Gamma_{0}^{\zeta}=\{\beta\in\Gamma^{\zeta}\mid\text{any proper ideal}\hspace*{1em}I_{\beta}\hspace*{1em}\text{of}\hspace*{1em}A_{\beta}\hspace*{1em}\text{satisfies}\hspace*{1em}I_{\beta}\cap A_{\zeta}\subset\rad A_{\zeta}.
\]
Then $\Gamma^{\zeta}=\Gamma_{k,l}^{\zeta}\cup\Gamma_{0}^{\zeta}$,
so by Lemma \ref{lem:Sublocal}, either $\{A_{\beta}\}_{\beta\in\Gamma_{k,\ell}^{\zeta}}$
or $\{A_{\beta}\}_{\beta\in\Gamma_{0}^{\zeta}}$ is a local system
of $A$. We claim that $\{A_{\beta}\}_{\beta\in\Gamma_{0}^{\zeta}}$
is a local system of $A$. Assume to the contrary that $\{A_{\beta}\}_{\beta\in\Gamma_{k,l}^{\zeta}}$
is local system. Then for any $A_{\gamma}\in\{A_{\beta}\}_{\beta\in\Gamma_{k,l}^{\zeta}}$,
there is an ideal $I_{\gamma}\subset A_{\gamma}$ such that $P_{\zeta_{k}}\subset I_{\gamma}\cap A_{\zeta}\subseteq M_{\zeta_{l}}$
for some $k$ and $l$. Hence, we get a system of ideals $\{I_{\beta}\}_{\beta\in\Gamma_{k,\ell}^{\zeta}}$
such that $I_{\beta}$ is an ideal of $A_{\beta}$ with $P_{\zeta_{k}}\subset I_{\beta}\cap A_{\zeta}\subset M_{\zeta_{l}}$
for some $1\le k\le t$ and $1\le l\le r$. Note that I\_$\beta\nsupseteq A_{\zeta}$
for all $\beta\in\Gamma_{k,l}^{\zeta}$ (because if $A_{\zeta}\subseteq I_{\beta}$,
then $A_{\zeta}=I_{\beta}\cap A_{\zeta}\subset M_{\zeta_{l}}$, a
contradiction as $M_{\zeta_{l}}$ is a proper maximal ideal of $A_{\zeta}$.
Since $A_{\zeta}\subseteq A_{\beta}$, for each $\beta\in\Gamma_{k,l}^{\zeta}$,
there is no $\delta_{\beta}\in\Gamma_{k,l}^{\zeta}$ such that $A_{\beta}\subseteq I_{\delta_{\beta}}$,
so by Proposition \ref{prop:Ideal }, 
\[
\cap_{\beta\in\Gamma_{k,l}^{\zeta}}I_{\beta}=0\text{, \, but \ensuremath{\,\cap_{\beta\in\Gamma_{k,l}^{\zeta}}I_{\beta}\in\{I_{\beta}\}_{\beta\in\Gamma_{k,l}^{\zeta}}}},
\]
so there is some $1\le k\le t$ such that $0\ne P_{\zeta_{k}}\subset\cap_{\beta\in\Gamma_{k,l}}I_{\beta}$,
a contradiction.

Hence, $\{A_{\beta}\}_{\beta\in\Gamma_{0}^{\zeta}}$ is a local system
of $A$. Thus, for every proper ideal $I_{\gamma}$ of $A_{\gamma}\in\{A_{\beta}\}_{\beta\in\Gamma_{0}^{\zeta}}$,
we have $I_{\gamma}\cap A_{\zeta}\subseteq\rad A_{\zeta}$, so for
$\alpha\in\Gamma$, there is $\gamma\in\Gamma_{0}^{\zeta}\subset\Gamma^{\zeta}\subset\Gamma$
such that if $I_{\gamma}$ is a proper ideal of $A_{\gamma}$, then
\[
I_{\gamma}\cap A_{\alpha}=(M_{\gamma}\cap A_{\zeta})\cap A_{\alpha}\subset\rad A_{\zeta}\cap A_{\alpha}=0.
\]
Therefore, for each $\alpha\in\Gamma$, there exists $\gamma\in\Gamma$
such that $A_{\alpha}\subset A_{\gamma}$ and $M_{\gamma}\cap A_{\alpha}=0$
for some maximal ideal $M_{\gamma}$ of $A_{\gamma}$, as required.
\end{proof}
If $A$ is finite dimensional, then it follows by Wedderburn-Malcev
Theorem (see \cite[Theorem 1]{BavMudShk2018}) that there exists a
semisimple subalgebra $S$ of $A$ such that $A=S\oplus\rad A$ and
for any semisimple subalgebra $Q$ of $A$, there is $r\in\rad A$
with $Q\subseteq(1+r)S(1+r)$. 
\begin{thm}
\cite[Theorem 6]{BavMudShk2018} Let $A$ be a finite dimensional
algebra and let $I$ be a left ideal of $A$. Suppose that $A/R$
is separable. Then there exists a semisimple subalgebra $S$ of $A$
such that $A=S\oplus\rad A$ and $I=I_{S}\oplus I_{\rad A}$, where
$I_{S}=I\bigcap S$ and $I_{\rad A}=I\bigcap\rad A$. 
\end{thm}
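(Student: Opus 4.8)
The plan is to reduce the statement to the classical Wedderburn--Malcev decomposition together with a splitting argument for the left ideal $I$. First I would invoke the separability hypothesis: since $A/R$ is separable (here $R=\rad A$), the Wedderburn--Malcev theorem provides a semisimple subalgebra $S_0$ of $A$ with $A=S_0\oplus\rad A$ as vector spaces. The work then lies in choosing this complement so that it is \emph{compatible} with $I$, i.e. so that $I=(I\cap S)\oplus(I\cap\rad A)$. The key observation is that $I\cap\rad A$ is automatically a left ideal contained in $\rad A$, so modulo $\rad A$ the image $\bar I=(I+\rad A)/\rad A$ is a left ideal of the semisimple algebra $A/\rad A\cong S_0$; by semisimplicity $\bar I$ is generated by an idempotent, and lifting this idempotent (using that $\rad A$ is nilpotent) yields an idempotent $e\in A$ with $\bar e$ the identity of $\bar I$.

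Next I would use the freedom in the Wedderburn--Malcev theorem, namely that any two semisimple complements are conjugate by an element $1+r$ with $r\in\rad A$. The plan is to arrange that the chosen idempotent $e$ lies in $S$: starting from $S_0$, one shows that $e$ and some idempotent $e_0\in S_0$ have the same image in $A/\rad A$, hence $e=(1+r)e_0(1+r)^{-1}$ for a suitable $r\in\rad A$, and then $S:=(1+r)S_0(1+r)^{-1}$ is a semisimple complement containing $e$. With this $S$ in hand, one sets $I_S=I\cap S$ and $I_{\rad A}=I\cap\rad A$ and checks the direct sum. For $x\in I$, write $x=s+n$ with $s\in S$, $n\in\rad A$ according to $A=S\oplus\rad A$; reducing modulo $\rad A$ shows $\bar s=\bar x\in\bar I$, so $s\in Se\subseteq I$ (using that $Se$ is the part of $S$ mapping onto $\bar I$), whence $n=x-s\in I\cap\rad A$. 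This gives $I=I_S+I_{\rad A}$, and the sum is direct because $S\cap\rad A=0$.

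A cleaner route, which I would probably write up instead, is to localize at the idempotent $e$. Since $\bar I=\bar e(A/\rad A)$... — more precisely $\bar I$ is a left ideal, so $\bar I=(A/\rad A)\bar e$ — one has $I+\rad A=Ae+\rad A$, and one checks $I=Ae\cap I\oplus(\rad A\cap I)$ by noting $Ae\cap I$ projects isomorphically onto $\bar I$ while everything in the kernel of the projection lands in $\rad A$. Then $Ae\cap I=Se\oplus(\rad A\, e\cap I)$ via $A=S\oplus\rad A$, and one absorbs the radical part to conclude $I_S=Se=I\cap S$. The bookkeeping of which intersections are left ideals versus merely subspaces is routine but must be done carefully.

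The main obstacle I anticipate is the \emph{simultaneous} requirement: one needs a single semisimple complement $S$ that works for $I$, and this forces the idempotent-lifting and conjugation steps to be executed in the right order — first lift the idempotent governing $\bar I$, then conjugate the Wedderburn complement to swallow it, rather than the reverse. A secondary subtlety is that $I$ is only a \emph{left} ideal, not two-sided, so one cannot treat $\bar I$ as an ideal of $S$; it is only a left ideal, hence of the form $(A/\rad A)\bar e$ for an idempotent $\bar e$ that need not be central. This is exactly why the argument must go through lifting $\bar e$ to $e\in S$ and describing $I_S$ as $Se$, and why the separability of $A/R$ (guaranteeing $\rad A$ really is the Jacobson radical with nilpotency, and that Wedderburn--Malcev applies) is essential.
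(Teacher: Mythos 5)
The paper does not actually prove this statement: it is imported verbatim as \cite[Theorem 6]{BavMudShk2018}, so there is no internal argument to measure yours against. Judged on its own, your architecture --- take a Wedderburn--Malcev complement $S_{0}$, write $\bar{I}=(I+\rad A)/\rad A$ as $(A/\rad A)\bar{e}$ for an idempotent $\bar{e}$, lift $\bar{e}$ modulo the nilpotent radical to an idempotent $e$, conjugate $S_{0}$ by a unit $1+r$ so that $e\in S$, and then split each $x\in I$ as $s+n$ --- is the right one, and the idempotent-lifting and conjugacy facts you invoke are all standard and correctly deployed.

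There is, however, a genuine gap at the decisive step ``$s\in Se\subseteq I$''. Your stated justification for $Se\subseteq I$ (that $Se$ is the part of $S$ mapping onto $\bar{I}$) only gives $Se\subseteq I+\rad A$, which is not enough. The inclusion one actually needs is $Se\subseteq AI\subseteq I$, and that requires $e\in I$; nowhere do you arrange this --- you lift $\bar{e}$ to ``an idempotent $e\in A$'' and later speak of lifting $\bar{e}$ ``to $e\in S$'', but never into $I$. The point is not cosmetic. In the algebra of upper triangular $2\times2$ matrices with $I=\bbF(e_{11}+e_{12})$, the idempotents lifting $\bar{e}_{11}$ are exactly $e_{11}+te_{12}$, and only $t=1$ lands in $I$; choosing the lift $e=e_{11}$ and the diagonal complement $S$ gives $Se=\bbF e_{11}\nsubseteq I$ and $(I\cap S)\oplus(I\cap\rad A)=0\neq I$. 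The repair is short but must be said: since $\bar{e}\in\bar{I}=(I+\rad A)/\rad A$, pick a preimage $a\in I$ of $\bar{e}$ and run the lifting iteration (e.g.\ $a\mapsto 3a^{2}-2a^{3}$) on $a$; the resulting idempotent is a constant-term-free polynomial in $a$ and hence lies in the left ideal $I$, and only then should $S_{0}$ be conjugated to contain it. The same omission also undermines your ``cleaner route'': without $e\in I$ there is no reason for $Ae\cap I$ to surject onto $\bar{I}$, and in any case the projection $Ae\to\bar{I}$ has kernel $(\rad A)e$, which need not meet $I$ trivially, so ``projects isomorphically'' is not correct as stated.
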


Recall that the \emph{rank of a perfect finite dimensional algebra
}$A$ is the smallest rank of the simple components of $A/\rad A$. 
\begin{thm}
\label{thm:simple has perfect local} Every simple locally finite
associative algebra over $\bbF$ has a perfect local system of arbitrary
large rank.
\end{thm}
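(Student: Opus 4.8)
**Proof plan for Theorem (every simple locally finite associative algebra has a perfect local system of arbitrarily large rank).**

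The plan is to fix a target rank $N$ and, starting from a perfect local system $\{A_\alpha\}_{\alpha\in\Gamma}$ of $A$ (which exists by Theorem \ref{thm:Perfect local system}), pass to a sublocal system consisting of algebras all of whose simple components have rank $\ge N$. The natural partition is $\Gamma = \Gamma_{<N}\cup\Gamma_{\ge N}$, where $\Gamma_{\ge N}=\{\alpha\in\Gamma\mid \operatorname{rank}A_\alpha\ge N\}$ and $\Gamma_{<N}$ is its complement. By Lemma \ref{lem:Sublocal}, at least one of $\{A_\alpha\}_{\alpha\in\Gamma_{<N}}$, $\{A_\alpha\}_{\alpha\in\Gamma_{\ge N}}$ is a local system of $A$. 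So the entire content is to rule out the possibility that $\{A_\alpha\}_{\alpha\in\Gamma_{<N}}$ is a local system, i.e. to show that $A$ cannot be written as a union of perfect finite dimensional subalgebras each of which has a simple quotient component of rank $<N$.

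First I would reduce the obstruction to a statement about a single element. Suppose, for contradiction, that $\{A_\beta\}_{\beta\in\Gamma_{<N}}$ is a local system. Pick a nonzero $p\in A$ and, using Lemma \ref{lem:GammaP} and Proposition \ref{prop:Ideal }(2), replace it by the local system $\{A_\beta^{p}\}_{\beta\in\Gamma_p}$ of ideals generated by $p$ — each $A_\beta^p$ is then a nonzero ideal of a perfect algebra $A_\beta$ of rank $<N$, and $\bigcap_{\beta}A_\beta^p\ne 0$ since it contains $p$. The key structural input is that a finite dimensional algebra of rank $<N$ over an algebraically closed field $\bbF$ embeds, after modding out its radical, into a product of matrix algebras $M_{n_1}(\bbF)\times\cdots\times M_{n_k}(\bbF)$ with $\min_i n_i < N$; in particular every such $A_\beta$ has a homomorphic image onto $M_m(\bbF)$ with $m<N$ (the smallest simple component), and a corresponding maximal ideal $M_\beta$ of codimension $m^2<N^2$.

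The main step, and the place where the real work lies, is to derive a contradiction from the coexistence of: (i) $A$ simple and locally finite, (ii) a cofinal family of its finite dimensional subalgebras $A_\beta$ each with a simple quotient of bounded rank $<N$. I would use Theorem \ref{thm:Maximal ideal}: for any fixed $\alpha$ there is $\gamma$ with $A_\alpha\subset A_\gamma$ and $M_\gamma\cap A_\alpha=0$ for some maximal ideal $M_\gamma$ of $A_\gamma$; then $A_\alpha$ embeds into the simple algebra $A_\gamma/M_\gamma\cong M_m(\bbF)$ for some $m<N$ (taking $M_\gamma$ to be the maximal ideal whose quotient has minimal rank, which we may since $A_\gamma$ has rank $<N$ when $\gamma\in\Gamma_{<N}$). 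But this forces $\dim A_\alpha\le m^2<N^2$ for every $\alpha$, bounding $\dim A$ by $N^2$ — contradicting that $A$ is infinite dimensional. Hence $\{A_\alpha\}_{\alpha\in\Gamma_{<N}}$ is not a local system, so $\{A_\alpha\}_{\alpha\in\Gamma_{\ge N}}$ is, and it is a perfect local system (being a subsystem of a perfect one) all of whose members have rank $\ge N$; since $N$ was arbitrary, $A$ has perfect local systems of arbitrarily large rank.

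The anticipated obstacle is making the application of Theorem \ref{thm:Maximal ideal} genuinely yield a uniform dimension bound: one must be careful that the maximal ideal $M_\gamma$ produced there can be chosen to have quotient of rank $<N$ — this is legitimate only if $\gamma$ lies in the set indexing algebras of rank $<N$, so one should run Theorem \ref{thm:Maximal ideal} inside the purported local system $\{A_\beta\}_{\beta\in\Gamma_{<N}}$ rather than in $\Gamma$ at large. Once $A_\gamma$ has all simple components of rank $<N$, every maximal ideal of $A_\gamma$ has quotient $M_m(\bbF)$ with $m<N$, so the embedding $A_\alpha\hookrightarrow A_\gamma/M_\gamma$ does the job and the uniform bound $\dim A_\alpha< N^2$ follows cleanly.
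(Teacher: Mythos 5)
Your overall strategy --- partition $\Gamma$ into $\Gamma_{<N}$ and $\Gamma_{\ge N}$, invoke Lemma \ref{lem:Sublocal}, and derive a contradiction from the assumption that $\{A_\beta\}_{\beta\in\Gamma_{<N}}$ is a local system --- is a genuinely different route from the paper's, but the key step does not go through. The rank of a perfect finite dimensional algebra is defined as the \emph{smallest} rank of the simple components of $A_\gamma/\rad A_\gamma$, so $\gamma\in\Gamma_{<N}$ only guarantees that \emph{one} simple component of $A_\gamma$ is small; the others may be arbitrarily large. Theorem \ref{thm:Maximal ideal} hands you \emph{some} maximal ideal $M_\gamma$ with $M_\gamma\cap A_\alpha=0$ --- you have no freedom to choose which one --- and the quotient $A_\gamma/M_\gamma$ is whichever simple component happens to avoid $A_\alpha$, typically a large one. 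Your sentence ``once $A_\gamma$ has all simple components of rank $<N$'' assumes something strictly stronger than $\rank A_\gamma<N$, and it is exactly the assumption that fails on $\Gamma_{<N}$. Consequently the uniform bound $\dim A_\alpha<N^2$ is not established and the contradiction evaporates.

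It is worth noting that the paper uses Theorem \ref{thm:Maximal ideal} in precisely the opposite direction: from $M_\gamma\cap A_\alpha=0$ and the perfectness of $A_\gamma$ (which forces $\rad A_\gamma\subseteq M_\gamma$, so that $A_\gamma/M_\gamma$ is a simple component $S_\gamma^i$) it concludes that $A_\alpha$ embeds into $S_\gamma^i$, hence that simple components of dimension at least $\dim A_\alpha$ occur; one then manufactures the desired system from (the ideals generated by) these large components, in the spirit of Proposition \ref{prop:Gamma_s local}, rather than trying to bound anything. If you want to rescue your argument you would need an additional step ensuring that the \emph{minimal} component of each $A_\gamma$ in your purported system already swallows $A_\alpha$ --- which is essentially the construction the paper performs --- so the contradiction scheme as you have set it up cannot be completed from Theorem \ref{thm:Maximal ideal} alone.
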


\begin{proof}
Let $A$ be a simple locally finite associative algebra. Then by Theorem
\ref{thm:Perfect local system}, $A$ has a perfect local system.
Let $\{A_{\alpha}\}_{\alpha\in\Gamma}$ be a perfect local system
of $A$. Then by Theorem \ref{thm:Maximal ideal}, for each $\alpha\in\Gamma$,
there is $\gamma\in\Gamma$ such that $A_{\alpha}\subset A_{\gamma}$
and $M_{\gamma}\cap A_{\alpha}=0$ for some maximal ideal $M_{\gamma}$
of $A_{\gamma}$. Since $A_{\gamma}$ is finite dimensional, $A_{\gamma}=S_{\gamma}\oplus R_{\gamma}$,
where $S_{\gamma}$ is a Levi subalgebra of $A_{\gamma}$ and $R_{\gamma}=\rad A_{\gamma}$
is the radical of $A_{\gamma}$. Let $\{S_{\gamma}^{1},\ldots,S_{\gamma}^{n}\}$
be the set of the simple components of $S_{\gamma}$. First we claim
that $R_{\gamma}\subseteq M_{\gamma}$. Assume to the contrary that
$R_{\gamma}\nsubseteq M_{\gamma}$. Then $M_{\gamma}+R_{\gamma}\ne M_{\gamma}$.
Since $M_{\gamma}$ is maximal, $A_{\gamma}=M_{\gamma}+R_{\gamma}$.
Thus, 
\[
A_{\gamma}/M_{\gamma}=(M_{\gamma}+R_{\gamma})/M_{\gamma}\cong R_{\gamma}/(R_{\gamma}\cap M_{\gamma})\ne0
\]
is a non-zero nilpotent quotient algebra of $A_{\gamma}$, but $A_{\gamma}^{2}=A_{\gamma}$
(as $A_{\gamma}$ are perfect for all $\gamma$), so $A_{\gamma}=A_{\gamma}^{n}\subseteq M_{\gamma}$,
a contradiction. Thus, $R_{\gamma}\subset M_{\gamma}$. Note that
$M_{\gamma}\nsupseteq S_{\gamma}$ (because $M_{\gamma}\ne A_{\gamma}$).
Since 
\[
A_{\gamma}/M_{\gamma}=(S_{\gamma}\oplus R_{\gamma})/M_{\gamma}=(S_{\gamma}+M_{\gamma})/M_{\gamma}\cong S_{\gamma}/(S_{\gamma}\cap M_{\gamma})\ne0,
\]
$A_{\gamma}/M_{\gamma}\cong S_{\gamma}^{i}$ for some $1\le i\le n$.
Since $A_{\alpha}\cap M_{\gamma}=0$, we have that $A_{\alpha}\subseteq S_{\gamma}^{i}$.
Therefore, there is a simple component $S_{\gamma}^{i}$ in every
Levi subalgebra $S_{\gamma}$ of $A_{\gamma}$ such that $\dim S_{\gamma}^{i}\ge\dim A_{\gamma}$. 
\end{proof}

\section{$1$-Perfect and Conical Local Systems}
\begin{defn}
\cite{BavShk} Let $A$ be an associative algebra over a field $\bbF$.
Then $A$ is called $1$-\emph{perfect} if $A$ has no ideals of codimension
$1$. 

An ideal $I$ of $A$ is called $1$-\emph{perfect} if as an algebra
$I$ is $1$-perfect. By using the second and the third Isomorphism
Theorems, we obtain the following well known properties.
\end{defn}

\begin{lem}
\label{1p} \cite{BavShk} (i) The sum of $1$-perfect ideals of an
associative algebra $A$ is a $1$-perfect ideal of $A$.

(ii) Let $P$ be a $1$-perfect ideal of $A$ and let $C$ be a $1$-perfect
ideal of $A/P$. Then the full preimage of $C$ in $A$ is $1$-perfect.
\end{lem}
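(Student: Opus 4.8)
The plan is to prove both parts by contradiction, in each case starting from a hypothetical codimension-$1$ ideal of the algebra under consideration and using the second or third isomorphism theorem to produce a codimension-$1$ ideal inside one of the given $1$-perfect ideals, contradicting its $1$-perfectness. No machinery beyond these isomorphism theorems is needed, which is why the statement is flagged as well known.

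For (i), let $I=\sum_{\lambda}I_{\lambda}$ be the sum of a family $\{I_{\lambda}\}$ of $1$-perfect ideals of $A$; this is an ideal of $A$. Suppose for contradiction that $J$ is an ideal of $I$ with $\dim(I/J)=1$. First I would check that $J\cap I_{\lambda}$ is an ideal of $I_{\lambda}$ for each $\lambda$: since $I_{\lambda}\subseteq I$ and $J$ is a two-sided ideal of $I$, for $a\in I_{\lambda}$ and $x\in J\cap I_{\lambda}$ we get $ax,xa\in I_{\lambda}$ (as $I_{\lambda}$ is a subalgebra) and $ax,xa\in J$ (as $J$ is an ideal of $I$ and $a\in I$). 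By the second isomorphism theorem, $I_{\lambda}/(J\cap I_{\lambda})\cong(I_{\lambda}+J)/J$, which is a subspace of the one-dimensional space $I/J$ and hence has dimension $\le 1$. Were this dimension equal to $1$, then $J\cap I_{\lambda}$ would be a codimension-$1$ ideal of $I_{\lambda}$, contradicting $1$-perfectness of $I_{\lambda}$. Therefore $I_{\lambda}\subseteq J$ for every $\lambda$, so $I=\sum_{\lambda}I_{\lambda}\subseteq J$, contradicting $\dim(I/J)=1$. Hence $I$ has no codimension-$1$ ideal, i.e. $I$ is $1$-perfect.

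For (ii), let $D$ denote the full preimage of $C$ in $A$, so that $P\subseteq D$, $D/P=C$, and $D$ is an ideal of $A$ (being the preimage of the ideal $C$ of $A/P$ under the quotient map). Suppose for contradiction that $J$ is an ideal of $D$ with $\dim(D/J)=1$. As in part (i), $J\cap P$ is an ideal of $P$ (here using $P\subseteq D$), and by the second isomorphism theorem $P/(J\cap P)\cong(P+J)/J$ embeds into the one-dimensional space $D/J$; since $P$ is $1$-perfect this forces $P/(J\cap P)=0$, i.e. $P\subseteq J$. Now $J/P$ is an ideal of $D/P=C$, and by the third isomorphism theorem $C/(J/P)=(D/P)/(J/P)\cong D/J$ is one-dimensional, so $J/P$ is a codimension-$1$ ideal of $C$, contradicting $1$-perfectness of $C$. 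Thus $D$ has no codimension-$1$ ideal and is $1$-perfect.

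There is no genuine obstacle here; the one point I would be careful about in the write-up is the verification that $J\cap I_{\lambda}$ (resp. $J\cap P$) is really an ideal of the smaller algebra, which relies on the containment $I_{\lambda}\subseteq I$ (resp. $P\subseteq D$) so that the ``outside'' multipliers in the definition of ideal still lie in the algebra on which $J$ is already known to be an ideal. It is also worth recording, as an implicit sanity check, that any $1$-perfect algebra $B$ is perfect: if $B^{2}\ne B$ then $B/B^{2}$ is a nonzero algebra with zero multiplication, and any codimension-$1$ subspace of it pulls back to a codimension-$1$ ideal of $B$; this is not needed above but clarifies why $1$-perfectness is the natural hypothesis.
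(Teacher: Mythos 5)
Your proof is correct and follows exactly the route the paper indicates: the paper gives no proof of its own (it cites Baranov--Shlaka and remarks only that the properties follow ``by using the second and the third Isomorphism Theorems''), and your argument is precisely that, applying the second isomorphism theorem to cut a hypothetical codimension-$1$ ideal down to one of the given $1$-perfect ideals and, in part (ii), the third isomorphism theorem to pass to $C$. Nothing further is needed.
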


By using Lemma \ref{1p}(i) we get that every associative algebra
contains the largest $1$-perfect ideal. 
\begin{defn}
\label{def:1-perfect radical} Let $A$ be an associative algebra
and let $\ccP_{A}$ be the largest $1$-perfect ideal of $A$. Then
$\ccP_{A}$ is said to be \emph{the }$1$\emph{-perfect radical }of
$A$.
\end{defn}

We will need the following simple fact.
\begin{lem}
\label{lem:ideal of sub is ideal} Let $P$ be an ideal of an associative
algebra $A$ over a field $\bbF$. If $P'$ is an ideal of $P$ with
$P'^{2}=P'$, then $P'$ is an ideal of $A$.
\end{lem}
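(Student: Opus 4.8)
The plan is to use the classical characterization of $1$-perfectness in terms of codimension-one ideals together with a standard algebraic trick: if $P'$ is an ideal of $P$ with $P'^2 = P'$, then $P'$ is ``square-closed'' and hence absorbs multiplication from the larger algebra $A$ modulo lower-order terms, and the idempotency $P'^2=P'$ forces those lower-order terms to vanish. Concretely, first I would observe that since $P$ is an ideal of $A$ and $P'\subseteq P$, for any $a\in A$ we have $aP' \subseteq aP \subseteq P$ and $P'a\subseteq Pa\subseteq P$; so $aP'$ and $P'a$ live inside $P$, and the only thing to check is that they actually land back inside $P'$.

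The key step is then to write $P' = P'^2 = P'P'$ and compute, for $a\in A$ and a typical element $\sum_i x_i y_i$ of $P'^2$ with $x_i,y_i\in P'$,
\[
a\Bigl(\sum_i x_i y_i\Bigr) = \sum_i (a x_i) y_i .
\]
Now $a x_i \in P$ because $P$ is an ideal of $A$, and $y_i \in P'$ with $P'$ an ideal of $P$, so $(a x_i) y_i \in P P' \subseteq P'$. Hence $aP'^2 \subseteq P'$, i.e. $aP' \subseteq P'$. The right-handed statement $P'a \subseteq P'$ is symmetric: write $P' = P'P'$ again and use $\bigl(\sum_i x_i y_i\bigr) a = \sum_i x_i (y_i a)$ with $y_i a \in P$ (as $P$ is an ideal of $A$) and $x_i \in P'$ an ideal of $P$, giving $x_i (y_i a) \in P' P \subseteq P'$. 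Thus $P'A \subseteq P'$ and $AP' \subseteq P'$, so $P'$ is a (two-sided) ideal of $A$.

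I do not expect any real obstacle here — the proof is a two-line absorption argument. The only point requiring a moment's care is that one genuinely needs the hypothesis $P'^2 = P'$ (idempotency), not merely $P'^2 \subseteq P'$: without it, $aP'$ need not be contained in $P'$ since an ideal of an ideal is in general only a subideal, not an ideal, of the ambient algebra. The trick of ``pushing the outer multiplier onto one factor of a product decomposition'' is precisely what converts the subideal property of $P'$ in $P$ into the full ideal property in $A$, and the equality $P' = P'^2$ is what lets us represent every element of $P'$ in the form to which this trick applies. One should also remark that the algebra is not assumed to have an identity, so the computation must be carried out at the level of elements $\sum_i x_i y_i$ rather than invoking $1 \in A$; this is already built into the argument above.
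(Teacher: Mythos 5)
Your proof is correct: writing each element of $P'=P'^{2}$ as $\sum_i x_i y_i$ and reassociating so that the outer multiplier is absorbed first into $P$ and then into $P'$ via $PP'\subseteq P'$ (and symmetrically on the right) is exactly the standard argument, and your remark that idempotency is what upgrades the subideal to an ideal is the right point to flag. The paper states this lemma without proof, as a ``simple fact,'' so there is nothing to compare against; your argument is the one the author is implicitly relying on.
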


The following results due to Baranov and Shlaka \cite{BavShk} Shows
that $\ccP_{A}$ has radical-like properties.
\begin{prop}
\label{1pproperties} Let $A$ be an associative algebra over $\bbF$.
The following hold.

(1) $\ccP_{A}^{2}=\ccP_{A}$.

(2) $\ccP_{\ccP_{A}}=\ccP_{A}$.

(3) $\ccP_{A/\ccP_{A}}=0$.

(4) Consider any maximal chain of subalgebras $A=A_{0}\supset A_{1}\supset\dots\supset A_{r}$
of $A$ such that $A_{i+1}$ is an ideal of $A_{i}$. If $\dim A_{i}/A_{i+1}=1$
for all $0\leq i\leq r-1$, then $A_{r}=\ccP_{A}$.
\end{prop}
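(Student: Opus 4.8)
\textbf{Proof plan for Proposition \ref{1pproperties}.}

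The plan is to establish the four parts essentially in the stated order, using Lemma \ref{1p}, Lemma \ref{lem:ideal of sub is ideal}, and the definition of $\ccP_A$ as the largest $1$-perfect ideal.

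For part (1), the key point is that $\ccP_A$ is itself a $1$-perfect algebra, so it has no ideal of codimension $1$. Since $\ccP_A^2$ is an ideal of $\ccP_A$ (indeed of $A$), if it were proper it would be contained in some codimension-$1$ ideal of $\ccP_A$; but then the quotient $\ccP_A/\ccP_A^2$ would be a nonzero algebra with zero multiplication, hence any codimension-$1$ subspace containing $\ccP_A^2$ is an ideal, contradicting $1$-perfectness. Thus $\ccP_A^2 = \ccP_A$. (Alternatively one argues directly: a $1$-perfect algebra $B$ satisfies $B^2 = B$, since $B/B^2$ is a zero-multiplication algebra and every codimension-$1$ subspace of it is an ideal.)

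For part (2), I would first note that $\ccP_A$, being $1$-perfect, has a largest $1$-perfect ideal $\ccP_{\ccP_A}$, and clearly $\ccP_{\ccP_A} \subseteq \ccP_A$. For the reverse inclusion, observe that $\ccP_A$ is a $1$-perfect ideal of $\ccP_A$ itself, so by maximality $\ccP_A \subseteq \ccP_{\ccP_A}$; hence equality. Part (3) follows from the standard radical formalism: by Lemma \ref{1p}(ii), if $\ccP_{A/\ccP_A}$ were nonzero, its full preimage in $A$ would be a $1$-perfect ideal of $A$ strictly containing $\ccP_A$, contradicting maximality of $\ccP_A$. Hence $\ccP_{A/\ccP_A} = 0$.

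Part (4) is the part I expect to require the most care. Given a maximal chain $A = A_0 \supset A_1 \supset \dots \supset A_r$ with each $A_{i+1}$ an ideal of $A_i$ and $\dim A_i/A_{i+1} = 1$, I would argue by downward induction along the chain that $\ccP_A \subseteq A_i$ for every $i$, and in particular $\ccP_A \subseteq A_r$. The base case $\ccP_A \subseteq A_0 = A$ is trivial. For the inductive step, suppose $\ccP_A \subseteq A_i$. Then $\ccP_A$ is an ideal of $A$ contained in $A_i$ with $\ccP_A^2 = \ccP_A$ by part (1), so by Lemma \ref{lem:ideal of sub is ideal} (applied along the chain, or directly since $\ccP_A$ is an ideal of $A$) $\ccP_A$ is an ideal of $A_i$; as it is $1$-perfect it is contained in $\ccP_{A_i}$. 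The crux is then to see $\ccP_A \subseteq A_{i+1}$: since $\dim A_i/A_{i+1} = 1$, the ideal $A_{i+1}$ of $A_i$ has codimension $1$, so any $1$-perfect ideal of $A_i$ must lie inside $A_{i+1}$ (a $1$-perfect ideal cannot surject onto the one-dimensional zero-multiplication quotient $A_i/A_{i+1}$, for its image would be a $1$-perfect ideal of that quotient, forcing the image to be zero). Hence $\ccP_A \subseteq A_{i+1}$, completing the induction and giving $\ccP_A \subseteq A_r$. For the reverse inclusion $A_r \subseteq \ccP_A$, I would use maximality of the chain: $A_r$ has no ideal of codimension $1$ (otherwise the chain could be extended), so $A_r$ is $1$-perfect; moreover $A_r^2 = A_r$, and by Lemma \ref{lem:ideal of sub is ideal} climbing the chain $A_r \triangleleft A_{r-1} \triangleleft \dots \triangleleft A_0 = A$ shows $A_r$ is a $1$-perfect ideal of $A$, hence $A_r \subseteq \ccP_A$ by maximality. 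Combining the two inclusions yields $A_r = \ccP_A$. The main obstacle is ensuring that "maximal chain" is being used correctly at both ends — that non-extendability forces $A_r$ to be $1$-perfect, and that the codimension-$1$ condition at each step is exactly what lets a $1$-perfect ideal descend through the chain.
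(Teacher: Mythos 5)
Your proposal is correct, and it is worth noting that the paper itself offers no argument for Proposition \ref{1pproperties} at all: the result is simply attributed to \cite{BavShk}, so your write-up supplies a self-contained proof where the paper gives only a citation. Parts (1)--(3) are the standard radical-style arguments and you execute them correctly: the clean version of (1) is indeed your parenthetical remark that a $1$-perfect algebra $B$ satisfies $B^{2}=B$ because every codimension-$1$ subspace of the zero-multiplication quotient $B/B^{2}$ is an ideal; (2) is immediate since $\ccP_{A}$ is a $1$-perfect ideal of itself; and (3) is exactly the intended use of Lemma \ref{1p}(ii). In part (4) you correctly identify the two points that need care. For $A_{r}\subseteq\ccP_{A}$, maximality of the chain (read as non-extendability by a further codimension-$1$ ideal step) gives that $A_{r}$ is $1$-perfect, hence $A_{r}^{2}=A_{r}$, and iterating Lemma \ref{lem:ideal of sub is ideal} up the chain promotes $A_{r}$ from a subideal to an ideal of $A$, so $A_{r}\subseteq\ccP_{A}$. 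For $\ccP_{A}\subseteq A_{r}$, your descent works: if $\ccP_{A}\subseteq A_{i}$ then $\ccP_{A}$ is an ideal of $A_{i}$, and its image in the one-dimensional quotient $A_{i}/A_{i+1}$ is a quotient of a $1$-perfect algebra, hence $1$-perfect, hence zero (a one-dimensional algebra always has the zero ideal as a codimension-$1$ ideal). One cosmetic slip: you call $A_{i}/A_{i+1}$ a ``zero-multiplication quotient,'' which need not hold (it could be isomorphic to $\bbF$), but your actual argument never uses this, only that no one-dimensional algebra is $1$-perfect. With that phrase adjusted, the proof is complete.
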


\begin{thm}
\label{thm:1-perfect local system} Any simple locally finite associative
algebra posses a $1$-perfect local system.
\end{thm}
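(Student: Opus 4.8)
The plan is to mimic the proof of Theorem~\ref{thm:Perfect local system}, but replacing the ``infinite power'' construction $A_\alpha^\infty=\cap_i A_\alpha^i$ with the $1$-perfect radical $\ccP_{A_\alpha}$. So, starting from an arbitrary local system $\{A_\alpha\}_{\alpha\in\Gamma}$ of the simple locally finite associative algebra $A$, I would form the system $\{\ccP_{A_\alpha}\}_{\alpha\in\Gamma}$, where $\ccP_{A_\alpha}$ is the $1$-perfect radical of the finite dimensional algebra $A_\alpha$. The two things to verify are: (a) this is a local system of the subalgebra $\cup_{\alpha}\ccP_{A_\alpha}$ of $A$; and (b) that subalgebra equals $A$. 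Granting both, Proposition~\ref{1pproperties}(2) gives $\ccP_{\ccP_{A_\alpha}}=\ccP_{A_\alpha}$, so each member is itself $1$-perfect, and we have the desired $1$-perfect local system.

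For step~(a), the key point is monotonicity: if $A_\alpha\subseteq A_\gamma$ then $\ccP_{A_\alpha}\subseteq\ccP_{A_\gamma}$. This is where I expect the main obstacle. The subalgebra $\ccP_{A_\alpha}$ is a $1$-perfect ideal of $A_\alpha$, but there is no reason it is an ideal of $A_\gamma$, nor that $1$-perfect ideals of a subalgebra sit inside the $1$-perfect radical of the larger algebra --- unlike, say, the nilpotent radical, ``having no codimension-one ideals'' is not obviously inherited upward. To get around this I would not try to prove naive monotonicity; instead I would pass to a \emph{refined} system. Fix $\alpha$ and apply Proposition~\ref{1pproperties}(4): choose a maximal chain $A_\alpha=B_0\supset B_1\supset\dots\supset B_s=\ccP_{A_\alpha}$ with each $B_{i+1}$ an ideal of codimension $1$ in $B_i$. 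Then, using Theorem~\ref{thm:Maximal ideal} and the Wedderburn--Malcev-type splitting (the displayed theorem from \cite{BavMudShk2018}), I would find $\gamma\ge\alpha$ in $\Gamma$ for which $A_\alpha\cap M_\gamma=0$ for some maximal ideal $M_\gamma$ of $A_\gamma$, so that $A_\alpha$ embeds (via the quotient $A_\gamma\to A_\gamma/M_\gamma$) into a simple component of a Levi subalgebra of $A_\gamma$; since simple finite dimensional associative algebras over $\bbF$ are $1$-perfect once their rank exceeds $1$, and Theorem~\ref{thm:simple has perfect local} lets us assume arbitrarily large rank, the image of $A_\alpha$ --- hence a copy of $A_\alpha$ itself, and in particular of $\ccP_{A_\alpha}$ --- lies inside $\ccP_{A_\gamma}$. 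Iterating this, and using Lemma~\ref{lem:ideal of sub is ideal} together with Proposition~\ref{1pproperties}(1) to control how $1$-perfect ideals propagate, yields a cofinal subsystem $\{A_\beta\}_{\beta\in\Gamma'}$ along which $\ccP_{A_\alpha}\subseteq\ccP_{A_\beta}$ whenever $\alpha\le\beta$ in $\Gamma'$.

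For step~(b), once the $\ccP_{A_\beta}$ form an increasing (along $\Gamma'$) system, their union $P:=\cup_{\beta\in\Gamma'}\ccP_{A_\beta}$ is a subalgebra of $A$, and each $\ccP_{A_\beta}$, being a $1$-perfect (hence perfect, by Proposition~\ref{1pproperties}(1)) ideal of $A_\beta$, is an ideal of $A_\beta$; passing to the direct limit, $P$ is an ideal of $A=\cup_\beta A_\beta$. It is nonzero: by Theorem~\ref{thm:simple has perfect local} we may start from a perfect local system of large rank, and for a simple finite dimensional algebra $A_\beta=M_n(\bbF)$ with $n\ge 2$ one has $\ccP_{A_\beta}=A_\beta\ne 0$; more robustly, any single nonzero element $p$ lies in some $A_\beta$ which, after the refinement above, has $A_\beta\subseteq\ccP_{A_\gamma}$ for a later $\gamma$, so $p\in P$ and in fact $P=A$. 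Since $A$ is simple and $P$ is a nonzero ideal, $P=A$, and therefore $\{\ccP_{A_\beta}\}_{\beta\in\Gamma'}$ (equivalently, a local system of $A$ all of whose members are $1$-perfect) exists. The finally required ``directedness'' of $\Gamma'$ is inherited from $\Gamma$, and the local-system axioms for $\{\ccP_{A_\beta}\}$ follow from those for $\{A_\beta\}_{\beta\in\Gamma'}$ by monotonicity. This completes the proof.
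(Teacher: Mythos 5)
Your overall strategy (pass to the $1$-perfect radicals $\ccP_{A_\alpha}$ and show they exhaust $A$ along a cofinal subsystem) is reasonable, and you correctly identify the central obstacle: $\ccP$ is not monotone under inclusion of subalgebras. But the step you propose to overcome this does not work. From Theorem \ref{thm:Maximal ideal} you obtain $\gamma\ge\alpha$ and a maximal ideal $M_{\gamma}$ of $A_{\gamma}$ with $A_{\alpha}\cap M_{\gamma}=0$. This only says that the quotient map $A_{\gamma}\to A_{\gamma}/M_{\gamma}$ is injective on $A_{\alpha}$, i.e.\ that an \emph{isomorphic copy} of $A_{\alpha}$ sits inside the simple algebra $A_{\gamma}/M_{\gamma}$. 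It does not place $A_{\alpha}$ itself, as a subset of $A_{\gamma}$, inside a Levi component or inside $\ccP_{A_{\gamma}}$: if $A_{\gamma}=S_{\gamma}\oplus M_{\gamma}$ as vector spaces, then $A_{\alpha}\cap M_{\gamma}=0$ means only that $A_{\alpha}$ projects injectively onto $S_{\gamma}$, not that $A_{\alpha}\subseteq S_{\gamma}$. A concrete obstruction: in $B=M_{2}(\bbF)\oplus\bbF$ one has $\ccP_{B}=M_{2}(\bbF)$, and $M=0\oplus\bbF$ is a maximal ideal; the graph $\{(x,\varphi(x)):x\in T\}$ of a character $\varphi$ on the algebra $T$ of upper triangular matrices meets $M$ trivially and is even perfect, yet it is not contained in $\ccP_{B}$. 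Since your step (b) needs the genuine set-theoretic containment $A_{\beta}\subseteq\ccP_{A_{\gamma}}$ in order to conclude $\cup_{\gamma}\ccP_{A_{\gamma}}=A$, an isomorphic copy is not enough, and the monotonicity of the refined system is never actually established. (Your auxiliary claim, that a simple subalgebra of rank $\ge2$ must lie in the $1$-perfect radical, is true and follows from Proposition \ref{1pproperties}(4); but it is applied to the wrong object, since the members $A_{\alpha}$ of your system are not known to be simple or $1$-perfect --- and assuming they are is essentially the statement being proved.)

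For contrast, the paper sidesteps the comparison of $\ccP_{A_{\alpha}}$ with $\ccP_{A_{\gamma}}$ entirely: it fixes a single $\alpha$, takes a simple Levi component $S$ of $\ccP_{A_{\alpha}}$, and for every $\beta$ with $A_{\beta}\supseteq S$ forms the ideal of $A_{\beta}$ generated by $S$; Proposition \ref{prop:Gamma_s local} makes these ideals into a local system, and Lemma \ref{lem:ideal of sub is ideal} together with the presence of the identity of $S$ yields their perfectness (and, when $S$ has rank at least $2$, their $1$-perfectness). To salvage your version you would need to first replace the given system by one whose members are themselves $1$-perfect, which is circular; the ideal-generation device is what breaks the circle.
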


\begin{proof}
By Theorem \ref{thm:Perfect local system} $A$ contains a perfect
system. Let $\{A_{\alpha}\}_{\alpha\in\Gamma}$ be a perfect system
of $A$. Consider the largest $1$-perfect radical $\ccP_{A_{\alpha}}$
of $A_{\alpha}$ for each $\alpha\in\Gamma$. Since $\mathcal{P}_{A_{\alpha}}$
is finite dimensional, $\mathcal{P}_{A_{\alpha}}=S_{\alpha}\oplus\rad\mathcal{P}_{A_{\alpha}}$
for some Levi subalgebra $S_{\alpha}$ of $\mathcal{P}_{A_{\alpha}}$.
Let $\{S_{\alpha}^{1},\ldots,S_{\alpha}^{n}\}$ be the set of the
simple components of $S_{\alpha}$. We denote by $A_{\alpha}^{i}$
to be the ideal of $\mathcal{P}_{A_{\alpha}}$ generated by $S_{\alpha}^{i}$.
Fix any index, say $1$. Since $A_{\alpha}^{1}$ is perfect, by Lemma
\ref{lem:ideal of sub is ideal}, $A_{\alpha}^{1}$ is an ideal of
$A_{\alpha}$. Thus, by Proposition \ref{prop:Gamma_s local}, $\{A_{\alpha}^{1}\}_{\alpha\in\Gamma_{S_{\alpha}^{1}}}$
is a $1$-perfect local system of $A$, where $\Gamma_{S_{\alpha}^{1}}=\{\beta\in\Gamma\mid A_{\beta}\supset A_{\alpha}^{1}\}$. 
\end{proof}
\begin{defn}
\label{def:Conical} Let $\{A_{\alpha}\}_{\alpha\in\varGamma}$ be
a perfect local system of $A$. Then $\{A_{\alpha}\}_{\alpha\in\varGamma}$
is said to be \emph{conical} if $\varGamma$ contains a minimal element
$1$ such that

1. $A_{1}\subseteq A_{\alpha}$ for all $\alpha\in\varGamma$;

2. $A_{1}$ is simple;

3. for each $\alpha\in\varGamma$ the restriction of any natural $A_{\alpha}$-module
to $A_{1}$ has a non-trivial composition factor. 
\end{defn}

We denote by the \emph{rank} \emph{of the conical local system }is
the rank of the perfect algebra $A_{1}$.
\begin{thm}
\label{thm:Conical of large rank} Every simple locally finite associative
algebra over $\bbF$ has conical local system of arbitrary large rank.
\end{thm}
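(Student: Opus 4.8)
The plan is to combine the $1$-perfect local system from Theorem~\ref{thm:1-perfect local system} with the rank-control from Theorem~\ref{thm:simple has perfect local}, and then to stabilise a single simple algebra at the bottom of the system. First I would start with a perfect local system $\{A_\alpha\}_{\alpha\in\Gamma}$ of $A$ of rank at least some prescribed integer $N$, which exists by Theorem~\ref{thm:simple has perfect local}. Passing to the $1$-perfect radicals $\ccP_{A_\alpha}$ (or the ideal $A_\alpha^1$ generated by a chosen simple component of a Levi subalgebra, exactly as in the proof of Theorem~\ref{thm:1-perfect local system}) I get a $1$-perfect local system; and because the construction in Theorem~\ref{thm:simple has perfect local} forces a simple component $S_\gamma^i$ with $\dim S_\gamma^i\ge\dim A_\gamma$, the simple components I keep can be taken of rank $\ge N$, so the resulting $1$-perfect local system still has rank $\ge N$.

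Next I would fix one member of this local system, call it $A_1$, and arrange it to be simple. By Theorem~\ref{thm:Maximal ideal} there is $\gamma\in\Gamma$ with $A_1\subset A_\gamma$ and a maximal ideal $M_\gamma$ of $A_\gamma$ with $M_\gamma\cap A_1=0$; then $A_1$ embeds into the simple algebra $A_\gamma/M_\gamma$, and replacing $A_1$ by (a copy of) a simple component inside some larger $A_\beta$ (as produced in the proof of Theorem~\ref{thm:simple has perfect local}) I may assume $A_1$ itself is simple, of rank $\ge N$, and contained in every member of a cofinal subsystem. Restricting $\Gamma$ to $\Gamma_{A_1}=\{\beta\in\Gamma\mid A_1\subseteq A_\beta\}$, which is still a local system of $A$ by Lemma~\ref{lem:GammaP}, and adjoining $A_1$ as the minimal element, I obtain a perfect local system with a simple minimal element $A_1$ satisfying conditions~1 and~2 of Definition~\ref{def:Conical}.

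For condition~3 I would argue that the embedding $A_1\hookrightarrow A_\alpha$ cannot be such that $A_1$ acts nilpotently (trivially) on every simple $A_\alpha$-module: if it did, then $A_1$ would lie in the intersection of the kernels of all irreducible representations of $A_\alpha$, i.e. in $\rad A_\alpha$, forcing $A_1$ to be nilpotent, contradicting simplicity. More precisely, using Lemma~\ref{lem:BavStr=000026BavRow}(2) one can pass to a cofinal subsystem on which $\rad A_\beta\cap A_1=0$ for $\beta\ge\alpha'$, so the image of $A_1$ is a nonzero semisimple subalgebra of $A_\beta$; a nonzero semisimple subalgebra cannot act with only trivial composition factors on the natural module of $M_{n_\beta}(\bbF)$ (its own regular-type action already has a nontrivial factor), giving condition~3. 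Finally, since this works for every $N$, letting $N\to\infty$ yields conical local systems of arbitrarily large rank.

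The main obstacle is the compatibility of the three reductions: arranging simultaneously that the minimal algebra $A_1$ is simple, that it sits inside \emph{every} member of the retained subsystem, that the subsystem remains a $1$-perfect (in particular perfect) local system of $A$, and that the rank is preserved. The delicate point is that ``restricting to $\Gamma_{A_1}$'' must be reconciled with ``replacing $A_1$ by a simple component of some $A_\beta$,'' since the latter changes $A_1$; I would handle this by first choosing $A_1$ via Theorems~\ref{thm:Maximal ideal} and~\ref{thm:simple has perfect local} as a genuinely simple subalgebra of high rank, and only then forming $\Gamma_{A_1}$ and verifying via Lemma~\ref{lem:GammaP} and Proposition~\ref{prop:Gamma_s local} that the ambient $1$-perfect system restricts to a local system of $A$ through $A_1$. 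Verifying condition~3 in characteristic $p>0$, where composition factors of the natural module need more care than in characteristic $0$, is the second place where I would expect to spend real effort, and the $\rad A_\beta\cap A_1=0$ reduction from Lemma~\ref{lem:BavStr=000026BavRow}(2) is the key tool there.
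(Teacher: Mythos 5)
Your overall strategy is close to the paper's: both pick a simple component $S$ of a Levi subalgebra of some member of a perfect local system of large rank, install it as the minimal element $A_1$, and pass to a cofinal subsystem. But there is a genuine gap in your verification of condition~3 of Definition~\ref{def:Conical}. That condition demands a non-trivial composition factor in the restriction to $A_1$ of \emph{every} natural $A_\alpha$-module, i.e.\ for every simple component of $A_\alpha/\rad A_\alpha$. Your argument (if $A_1$ acted trivially on all irreducibles it would lie in $\rad A_\alpha$ and hence be nilpotent) only shows that $A_1$ acts non-trivially on \emph{at least one} natural module. It does nothing to rule out, say, $A_\alpha/\rad A_\alpha\cong M_2(\bbF)\oplus M_3(\bbF)$ with $A_1$ mapping isomorphically onto the first summand and to $0$ in the second: there $\rad A_\alpha\cap A_1=0$ holds, yet the natural $3$-dimensional module restricts trivially to $A_1$. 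Because you keep the original algebras $A_\beta$ for $\beta\in\Gamma_{A_1}$, nothing in your construction prevents this.

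The paper avoids exactly this by \emph{not} keeping the $A_\gamma$'s: it replaces each $A_\gamma$ (for $\gamma\ge\beta$) by the ideal $A_\gamma^{s}$ of $A_\gamma$ generated by $S$, checks that each $A_\gamma^{s}$ is perfect, and uses simplicity of $A$ to conclude that $\cup_{\gamma}A_\gamma^{s}=A$, so that $\{A_\gamma^{s}\}$ together with $A_1^{s}=S$ is again a local system. Passing to $A_\gamma^{s}$ discards precisely the simple components of $A_\gamma$ on which $S$ acts trivially, which is what makes condition~3 attainable. You invoke Proposition~\ref{prop:Gamma_s local} only as a device to check that the restricted index set is still a local system, not as the actual replacement of the members of the system; that replacement is the missing idea. (The detour through Theorem~\ref{thm:1-perfect local system} and Theorem~\ref{thm:Maximal ideal} is harmless but unnecessary here: the statement only asks for a conical system, and simplicity of $A_1$ comes directly from choosing a simple Levi component, not from factoring by a maximal ideal.)
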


\begin{proof}
Let $A$ be a simple locally finite associative algebra over $\bbF$.
By Corollary \ref{thm:simple has perfect local}, $A$ has a perfect
local system of arbitrary large rank. Suppose that $\{A_{\alpha}\}_{\alpha\in\Gamma}$
is a perfect local system of $A$ of arbitrary large rank. Fix $A_{\beta}\in\{A_{\alpha}\}_{\alpha\in\Gamma}$.
Since $A_{\beta}$ is finite dimensional associative algebra, there
is a Levi subalgebra $S_{\beta}$ of $A_{\beta}$ such that $A_{\beta}=S_{\beta}\oplus R_{\beta}$,
where $R_{\beta}=\rad A_{\beta}$ is the radical of $A_{\beta}$.
Let $S$ be a simple component of $S_{\beta}$. For all $\gamma\ge\beta$
we denote by $A_{\gamma}^{s}$ to be the ideal of $A_{\gamma}$ that
generated by $S$. Then $A_{\gamma}^{s}$ is the smallest ideal of
$A_{\gamma}$ that contains $S$. Since $(A_{\gamma}^{s})^{2}\subseteq A_{\gamma}^{s}$
is also an ideal of $A_{\gamma}$ that contains $S$, we get that
$(A_{\gamma}^{s})^{2}=A_{\gamma}^{s}$, so $A_{\gamma}^{s}$ is perfect.
Put $A_{1}^{s}=S$ and $\gamma^{s}=\{\gamma\in\gamma\mid\gamma\ge\beta\}\cup\{1\}$.
Put $A^{s}=\cup_{\gamma\in\gamma^{s}}A_{\gamma}^{s}$. Since $A_{\gamma}^{s}\subseteq A_{\gamma'}^{s}$
for all $\gamma$ and $\gamma^{'}\in\gamma^{s}$ with $\gamma\le\gamma'$,
we get that $A^{s}=\dlim A_{\gamma}^{s}$ is an ideal of $A$. But
$A$ is simple, so $A^{s}=A$ and $\{A_{\gamma}^{s}\}_{\gamma\in\gamma^{s}}$
is a perfect local system of $A$. Moreover, we have $\{A_{\gamma}^{s}\}_{\gamma\in\gamma^{s}}$
is a conical local system of $A$ because $\{A_{\gamma}^{s}\}_{\gamma\in\gamma^{s}}$
is a perfect local system with $\gamma^{s}$ containing a minimal
element $1$ that satisfies the conditions (1), (2) and (3) of Definition
\ref{def:Conical}, so it is a local system of arbitrary large rank,
as required.
\end{proof}
\begin{thm}
Every simple locally finite associative algebra over $\bbF$ has $1$-perfect
conical local system .
\end{thm}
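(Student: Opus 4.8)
The plan is to upgrade Theorem \ref{thm:Conical of large rank} by observing that a conical local system whose base algebra has rank at least $2$ is automatically $1$-perfect, so that no new construction is needed. First I would apply Theorem \ref{thm:Conical of large rank} to obtain a conical local system $\{A_{\gamma}^{s}\}_{\gamma\in\gamma^{s}}$ of $A$ of rank $\geq 2$ (in fact of arbitrarily large rank), with minimal element $1$ and base $A_{1}^{s}=S$ a simple finite dimensional subalgebra of rank $n\geq 2$. Since $\bbF$ is algebraically closed, $S\cong M_{n}(\bbF)$, so $\dim S=n^{2}\geq 4$; the only ideals of $S$ are $0$ and $S$, neither of codimension $1$, hence $S$ is $1$-perfect. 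This extra property of the base --- $1$-perfect rather than merely simple --- is the crucial ingredient.

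Next I would record two facts about the members $A_{\gamma}^{s}$, both already contained in (or immediate from) the proof of Theorem \ref{thm:Conical of large rank}: each $A_{\gamma}^{s}$ is perfect, and each $A_{\gamma}^{s}$ equals the ideal of $A_{\gamma}^{s}$ generated by $S$. For the latter, if $I$ denotes the ideal of $A_{\gamma}^{s}$ generated by $S$, then $I^{2}$ is an ideal of $A_{\gamma}^{s}$ containing $S^{2}=S$, so $I^{2}=I$ by minimality; thus $I$ is perfect, and by Lemma \ref{lem:ideal of sub is ideal} it is an ideal of $A_{\gamma}$ containing $S$, whence $I\supseteq A_{\gamma}^{s}$ and therefore $I=A_{\gamma}^{s}$.

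The main step is to prove that each $A_{\gamma}^{s}$ is $1$-perfect. Suppose, for contradiction, that $K$ is an ideal of $A_{\gamma}^{s}$ with $\dim(A_{\gamma}^{s}/K)=1$. Since $A_{\gamma}^{s}$ is perfect, the quotient $A_{\gamma}^{s}/K$ is a nonzero $1$-dimensional perfect associative algebra and hence isomorphic to $\bbF$; write $\phi\colon A_{\gamma}^{s}\to\bbF$ for the corresponding nonzero quotient homomorphism. Its restriction $\phi|_{S}$ is an algebra homomorphism $S\to\bbF$; were it nonzero it would be surjective, so $S/\ker(\phi|_{S})\cong\bbF$ would exhibit an ideal of $S$ of codimension $1$, contradicting the $1$-perfectness of $S$. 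Hence $\phi|_{S}=0$, i.e.\ $S\subseteq K$; but $K$ is an ideal of $A_{\gamma}^{s}$ containing $S$, so $K\supseteq A_{\gamma}^{s}$ by the previous paragraph, contradicting $\dim(A_{\gamma}^{s}/K)=1$. Thus $A_{\gamma}^{s}$ has no ideal of codimension $1$, i.e.\ it is $1$-perfect, and so $\{A_{\gamma}^{s}\}_{\gamma\in\gamma^{s}}$ is a $1$-perfect conical local system of $A$ (of arbitrarily large rank).

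I expect the only genuine obstacle to be the recognition that ``rank $\geq 2$'' is precisely what turns the base $S$ from simple into $1$-perfect, together with the bookkeeping that $S$ generates each $A_{\gamma}^{s}$ as an ideal, so that $1$-perfectness of the base propagates to the whole system. Everything else (perfectness of the $A_{\gamma}^{s}$, the local-system axioms, and condition (3) of Definition \ref{def:Conical}) is inherited unchanged from Theorem \ref{thm:Conical of large rank}. Alternatively one could start from the $1$-perfect local system of Theorem \ref{thm:1-perfect local system} and conicalise it by the construction of Theorem \ref{thm:Conical of large rank}, using that in a $1$-perfect finite dimensional algebra every simple component of a Levi subalgebra has rank $\geq 2$; the argument then proceeds along the same lines.
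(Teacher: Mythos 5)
Your proof is correct, but it runs in the opposite direction to the paper's. The paper's (very terse) argument starts from the $1$-perfect local system of Theorem \ref{thm:1-perfect local system} and then applies the conicalisation procedure of Theorem \ref{thm:Conical of large rank}; you instead start from a conical local system of rank $\geq 2$ supplied by Theorem \ref{thm:Conical of large rank} and prove that $1$-perfectness comes for free. Your key observation --- that each member $A_{\gamma}^{s}$ coincides with the ideal of $A_{\gamma}^{s}$ generated by the base $S$ (via $I^{2}=I$ and Lemma \ref{lem:ideal of sub is ideal}), so that any codimension-$1$ ideal $K$ would induce a nonzero homomorphism $A_{\gamma}^{s}\to\bbF$ whose restriction to $S\cong M_{n}(\bbF)$, $n\ge2$, must vanish, forcing $S\subseteq K$ and hence $K=A_{\gamma}^{s}$ --- is sound and is precisely the verification the paper's one-line proof omits: it is not obvious that conicalising a $1$-perfect system preserves $1$-perfectness of the members, since the ideals $A_{\gamma}^{s}$ generated by a simple Levi component need not inherit $1$-perfectness from the ambient $A_{\gamma}$ without an argument of exactly your kind. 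So your route buys a genuinely complete proof (and yields the stronger statement that the $1$-perfect conical system can be taken of arbitrarily large rank), at the cost of needing the rank-$\geq2$ hypothesis on the base, which Theorem \ref{thm:Conical of large rank} happily provides; the only caveat is your closing assertion that every simple Levi component of a $1$-perfect finite dimensional algebra has rank $\geq 2$, which is not justified and not needed for your main argument.
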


\begin{proof}
Let $A$ be a simple locally finite associative algebra over $\bbF$.
By Theorem \ref{thm:1-perfect local system}, $A$ has a $1$-perfect
local system. It remains to follow the same process as in the proof
of Theorem \ref{thm:Conical of large rank}, we get the required results. 
\end{proof}

\section*{Acknowledgement: }

\subsubsection*{I would like to express my appreciation to my Supervisor Dr Alexander
Baranov for his advice and encouragement.}

\end{document}